\newcommand{\bF}{\mathbb{F}}
\newcommand{\bN}{\mathbb{N}}
\newcommand{\bP}{\mathbb{P}}
\newcommand{\bfA}{\mathbf{A}}
\newcommand{\bfB}{\mathbf{B}}
\newcommand{\bfM}{\mathbf{M}}
\newcommand{\bfX}{\mathbf{X}}
\newcommand{\bfY}{\mathbf{Y}}
\newcommand{\bfu}{\mathbf{u}}
\newcommand{\bfx}{\mathbf{x}}
\newcommand{\bfy}{\mathbf{y}}
\newcommand{\bfz}{\mathbf{z}}
\newcommand{\cL}{\mathcal{L}}
\newcommand{\cR}{\mathcal{R}}
\newcommand{\cS}{\mathcal{S}}
\newcommand{\cT}{\mathcal{T}}
\newcommand{\Ff}{\widehat{f}}
\newcommand{\mn}{{\rm M}_n}
\newcommand{\mnq}{{\rm M}_n(\mathbb{F}_q)}
\def\({\left(}
\def\){\right)}
\numberwithin{equation}{section}
\newtheorem{thm}{Theorem}[section]
\newtheorem*{thm*}{Remark}
\newtheorem{prop}[thm]{Proposition}
\newtheorem{lem}[thm]{Lemma}
\newtheorem{rem}[thm]{Remark}
\newtheorem{conj}[thm]{Conjecture}
\begin{document}

\title{A matrix variant of the Erd\H{o}s-Falconer distance problems over finite field}

\author{Hieu T. Ngo}
\address{Institute of Mathematics, 
Vietnam Academy of Science and Technology, Hanoi, Vietnam}
\email{nthieu@math.ac.vn}

\subjclass[2010]{11T24, 52C10} 
\keywords{Erd\H{o}s-Falconer distance problems, discrete Fourier analysis, quadratic Gauss sums}
\thanks{}

\begin{abstract} 
We study a matrix analog of the Erd\H{o}s-Falconer distance problems in vector spaces over finite fields. There arises an interesting analysis of certain quadratic matrix Gauss sums. 
\end{abstract}

\maketitle
%\tableofcontents

%==================================
\section{Introduction}
%==================================

%======================
\subsection{Problem formulation}\label{subsect:formulation}
%======================

Let $\bF_q$ be a finite field of odd cardinality $q$. 
On the vector space $\bF_q^r$, consider the norm-like function 
$$
\|\bfx\|= \sum_{i=1}^r x_i^2 \in \bF_q 
$$
where $\bfx=(x_1,\dots,x_r)\in\bF_q^r$. On $\bF_q^r \times \bF_q^r$, define the distance-like function 
$$
d(\bfx,\bfy)=\|\bfx-\bfy\| 
\quad (\bfx,\bfy \in \bF_q^r).
$$
The finite field Erd\H{o}s distance problem seeks, for a subset $E\subseteq\bF_q^r$, the smallest possible cardinality of the `distance set' 
$$
\Delta(E)=\{d(\bfx,\bfy): \bfx,\bfy \in E\}.
$$

Throughout this paper, the cardinality $q$ of $\bF_q$ is an asymptotic parameter that can be arbitrarily large.
For two functions $f$ and $g$ of odd prime powers $q$ which take values in the complex numbers, we adopt the Vinogradov notation $f\gg g$ (resp.~$f\ll g$) to mean that there exists a positive constant $c>0$ such that $|f(q)|>c|g(q)|$ (resp.~$|f(q)|< c|g(q)|$) for all such $q$; the Bachmann-Landau notations $f=O(g)$ and $g=\Omega(f)$ have the same meaning as $f\ll g$. If both asymptotics $f\ll g$ and $g\ll f$ are satisfied, one  writes $f\asymp g$ or, equivalently, $f=\Theta(g)$. In addition, by $f\sim g$ we mean $\frac{f(q)}{g(q)}$ tends to $1$ as $q$ approaches infinity.

The following finite field Falconer distance conjecture was proposed by Iosevich and Rudnev \cite[Conjecture 1.1]{IR07}.

\begin{conj}[Iosevich-Rudnev]\label{conj:IR}
    Suppose that $r\geq 2$ and that $E\subseteq \bF_q^r$ has cardinality at least $cq^{\frac{r}{2}}$ with $c$ sufficiently large.
    Then $\#\Delta(E) \gg q$.
\end{conj}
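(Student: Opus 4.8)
The plan is to run the classical second-moment analysis of the distance distribution through discrete Fourier analysis and quadratic Gauss sums, and then to isolate the single analytic estimate on which the sharp exponent $r/2$ depends. First, for $t\in\bF_q$ set
$$
\nu(t)=\#\{(\bfx,\bfy)\in E\times E:\|\bfx-\bfy\|=t\}.
$$
Then $\nu$ is supported on $\Delta(E)$ and $\sum_{t}\nu(t)=(\#E)^2$, so Cauchy--Schwarz gives
$$
\#\Delta(E)\ \geq\ \frac{(\#E)^4}{\sum_{t\in\bF_q}\nu(t)^2}.
$$
Hence it suffices to prove the second-moment bound $\sum_{t}\nu(t)^2\ll (\#E)^4/q$ as soon as $\#E\gg q^{r/2}$; this immediately yields $\#\Delta(E)\gg q$.

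Next I would compute $\sum_t\nu(t)^2$ by Fourier analysis. Fix a nontrivial additive character $\chi$ of $\bF_q$ and write $\widehat{\mathbf 1_E}(\bfm)=\sum_{\bfx\in E}\chi(-\bfm\cdot\bfx)$, so that $\sum_{\bfm}|\widehat{\mathbf 1_E}(\bfm)|^2=q^r\#E$ by Plancherel. Opening the constraint $\|\bfz\|=t$ with characters and evaluating the resulting one-dimensional quadratic Gauss sums $g_s=\sum_{u}\chi(su^2)$ (each of modulus $q^{1/2}$ for $s\ne0$) gives $\nu(t)=(\#E)^2/q+\mathcal R(t)$, and a direct orthogonality computation collapses the second moment of the remainder to the exact identity
$$
\sum_{t}\mathcal R(t)^2\ =\ q^{-r}\Big(\sum_{\lambda\in\bF_q}B(\lambda)^2-q^{2r-1}(\#E)^2\Big),
\qquad B(\lambda)=\!\!\sum_{\|\bfm\|=\lambda}\!\!|\widehat{\mathbf 1_E}(\bfm)|^2.
$$
Because $\sum_{\lambda}B(\lambda)=q^r\#E$, the minimum of $\sum_\lambda B(\lambda)^2$ over nonnegative $B$ carrying this mass is exactly $q^{2r-1}(\#E)^2$; thus the error is governed entirely by how \emph{unevenly} the Fourier mass of $E$ is spread across the frequency spheres $\{\|\bfm\|=\lambda\}$, and the whole problem becomes showing this deviation is $\ll (\#E)^4/q$.

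The natural reduction is to bound $\sum_\lambda B(\lambda)^2\leq(\max_\lambda B(\lambda))\cdot q^r\#E$, turning everything into the extremal spherical sum $\max_\lambda B(\lambda)$. Writing $B(\lambda)=\sum_{\bfz}(\mathbf 1_E\star\mathbf 1_E)(\bfz)\,\widehat{\mathbf 1_{S_\lambda}}(\bfz)$ and separating $\bfz=0$, the Weil/Salié estimate $\max_{\bfz\neq 0}|\widehat{\mathbf 1_{S_\lambda}}(\bfz)|\ll q^{(r-1)/2}$ for $\lambda\neq0$ yields $B(\lambda)\ll q^{r-1}\#E+q^{(r-1)/2}(\#E)^2$. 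The leading term cancels the subtracted mass $q^{2r-1}(\#E)^2$ exactly, leaving $\sum_t\mathcal R(t)^2\ll q^{(r-1)/2}(\#E)^3$. Comparing with $(\#E)^4/q$ forces $\#E\gg q^{(r+1)/2}$ — precisely the Iosevich--Rudnev threshold, a full half-power above the conjectured $q^{r/2}$.

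The main obstacle is exactly this half-power. The factor $q^{(r-1)/2}$ is the pointwise Fourier decay of a single sphere and is sharp; it cannot be improved by any character-sum estimate, so pulling it out of the sum as above is inherently lossy. Reaching $q^{r/2}$ requires instead a gain \emph{from the summation itself}: an $L^2$ restriction (extension) inequality for the finite-field sphere guaranteeing that $\widehat{\mathbf 1_E}$ cannot concentrate on any one sphere $S_\lambda$, equivalently that $\sum_\lambda B(\lambda)^2$ stays within a constant factor of its minimum $q^{2r-1}(\#E)^2$ once $\#E\gg q^{r/2}$. This is a genuine non-concentration statement, not a formal manipulation, and it is the crux: it fails in the worst case for even $r$, where $\widehat{\mathbf 1_E}$ can pile up on an isotropic sphere, which is exactly why the exponent $r/2$ is so delicate and the pure Fourier method stalls at $(r+1)/2$. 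I would therefore expect the plan to close only by supplementing the Gauss-sum input with incidence-geometric information — a point--sphere incidence or sum--product bound controlling the additive energy of $E$ — and establishing that restriction estimate is where the real difficulty, and the dimensional subtlety of the conjecture, resides.
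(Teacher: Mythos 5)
You have not proved the statement, and it is important to be clear why: in the paper this is Conjecture~\ref{conj:IR}, quoted from Iosevich--Rudnev, and the paper itself contains \emph{no proof} of it --- it is an open problem, recorded as motivation. What the paper (following \cite{IR07}) actually proves is Theorem~\ref{thm:IR}, the weaker threshold $q^{\frac{r+1}{2}}$, and the paper's new results are matrix analogues of that threshold. Your proposal, read honestly, does exactly the same: the Cauchy--Schwarz reduction $\#\Delta(E)\geq (\#E)^4/\sum_t\nu(t)^2$, the Gauss-sum evaluation of $\nu(t)$, the collapse of $\sum_t\mathcal R(t)^2$ to the spherical Fourier masses $B(\lambda)$, and the pointwise sphere decay $\max_{\bfz\neq 0}|\widehat{\mathbf 1_{S_\lambda}}(\bfz)|\ll q^{(r-1)/2}$ are all sound steps, and carried through they reprove the $(r+1)/2$ result. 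The decisive step --- the non-concentration estimate $\sum_\lambda B(\lambda)^2\ll q^{2r-1}(\#E)^2$ for $\#E\gg q^{r/2}$, equivalently an $L^2$ restriction inequality for finite-field spheres --- is never established; your final paragraph explicitly defers it. In this pipeline that missing estimate is not a refinement but carries the full strength of the conjecture, so the proposal is a correct derivation of the known theorem together with an accurate diagnosis of the obstruction, not a proof of the statement. There is a genuine gap, and you named it yourself.

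Two smaller points of accuracy. First, your even/odd attribution should be checked against the literature: the concentration of Fourier mass on an isotropic sphere that you invoke corresponds to Lagrangian-type subspaces in even dimensions (which show the exponent $r/2$ in the conjecture could not be lowered and explain the role of the constant $c$), whereas the known sharpness of the $(r+1)/2$ threshold --- for the \emph{stronger} conclusion $\Delta(E)=\bF_q$, via sphere constructions and Sali\'e sums (Hart--Iosevich--Koh--Rudnev) --- is an odd-dimensional phenomenon. Second, the conjecture asks only $\#\Delta(E)\gg q$, a positive proportion of distances, not $\Delta(E)=\bF_q$; your Cauchy--Schwarz framing is correctly adapted to the positive-proportion version, but your closing discussion conflates the two questions, and the partial progress you allude to (incidence-geometric and sum--product inputs, e.g.\ the improvements known in dimension $2$) applies to the positive-proportion version only. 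None of this rescues the argument: as it stands it establishes Theorem~\ref{thm:IR}, and the conjecture remains open both in the paper and in your proposal.
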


The lower bound exponent $\frac{r+1}{2}$ was the first result toward this conjecture \cite[Theorem 1.3]{IR07}.

\begin{thm}[Iosevich-Rudnev]\label{thm:IR}
    Suppose that $r\geq 2$ and that $E\subseteq \bF_q^r$ has cardinality at least $cq^{\frac{r+1}{2}}$ where $c>0$ is sufficiently large.
    Then $\Delta(E) = \bF_q$.
\end{thm}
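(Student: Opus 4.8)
The plan is to fix a nonzero target $t\in\bF_q$ and prove that the number of realizations
\[
\nu(t)=\#\{(\bfx,\bfy)\in E\times E:\ \|\bfx-\bfy\|=t\}
\]
is strictly positive as soon as $\#E\geq cq^{(r+1)/2}$ with $c$ large; since $t$ is arbitrary and the value $t=0$ is attained trivially along the diagonal, this yields $\Delta(E)=\bF_q$. Fixing a nontrivial additive character $\chi$ of $\bF_q$ and writing $E$ also for the indicator function of $E$, orthogonality of characters encodes the constraint as
\[
\nu(t)=\frac1q\sum_{s\in\bF_q}\chi(-st)\sum_{\bfx,\bfy}E(\bfx)E(\bfy)\,\chi\big(s\|\bfx-\bfy\|\big).
\]
The term $s=0$ contributes the expected main term $\#E^2/q$, and the whole task is to show that the contribution $\cR(t)$ of the terms $s\neq0$ has absolute value below this.

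For each $s\neq0$ I would diagonalize the inner sum by Fourier analysis on $\bF_q^r$. Writing $\widehat{E}(\bfm)=\sum_{\bfx}E(\bfx)\chi(-\bfx\cdot\bfm)$ and completing the square in each coordinate of $\|\bfx-\bfy\|=\sum_i(x_i-y_i)^2$ introduces the quadratic Gauss sum $G(s)=\sum_{z}\chi(sz^2)=\eta(s)\mathfrak g$, where $\eta$ is the quadratic character and $|\mathfrak g|=\sqrt q$; a short computation gives
\[
\sum_{\bfx,\bfy}E(\bfx)E(\bfy)\,\chi\big(s\|\bfx-\bfy\|\big)=\frac{G(s)^r}{q^r}\sum_{\bfm\in\bF_q^r}\chi\!\Big(-\tfrac{\|\bfm\|}{4s}\Big)\,|\widehat{E}(\bfm)|^2.
\]
Substituting this back and interchanging the order of summation, the error term becomes
\[
\cR(t)=\frac1{q^{r+1}}\sum_{\bfm\in\bF_q^r}|\widehat{E}(\bfm)|^2\sum_{s\neq0}G(s)^r\,\chi\!\Big(-st-\tfrac{\|\bfm\|}{4s}\Big).
\]

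The crux is the inner sum over $s$. Since $G(s)^r=\eta(s)^r\mathfrak g^r$, it equals $\mathfrak g^r$ times a Kloosterman sum when $r$ is even and a Salié sum when $r$ is odd, in the parameters $t$ and $\|\bfm\|$; by Weil's bound each is $O(\sqrt q)$, so for $t\neq0$ the inner $s$-sum is $O(q^{(r+1)/2})$ uniformly in $\bfm$, the degenerate loci $\|\bfm\|=0$ (including $\bfm=\mathbf 0$) contributing no more. Pulling out this uniform bound and applying Plancherel, $\sum_{\bfm}|\widehat{E}(\bfm)|^2=q^r\#E$, yields
\[
|\cR(t)|\ll\frac{q^{(r+1)/2}}{q^{r+1}}\,q^r\,\#E=q^{(r-1)/2}\#E,
\]
and $q^{(r-1)/2}\#E<\#E^2/q$ is exactly the condition $\#E>q^{(r+1)/2}$, so a sufficiently large $c$ forces $\nu(t)\geq \#E^2/q-|\cR(t)|>0$. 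I expect the main obstacle to be this exponential-sum step: correctly evaluating $G(s)$, recognizing and bounding the resulting Kloosterman/Salié sums via Weil, and verifying that none of the degenerate cases (isotropic $\bfm$, the frequency $\bfm=\mathbf 0$, and the excluded value $t=0$) produces a term of main order that would spoil the estimate.
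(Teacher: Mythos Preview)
The paper does not give its own proof of this theorem; it is quoted from Iosevich--Rudnev \cite{IR07}, with the remark that \cite{AMM17} later gave an alternative argument bypassing Kloosterman sums. Your proof is correct and is essentially the original Iosevich--Rudnev argument: express $\nu(t)$ via orthogonality, complete the square in each coordinate to factor out the quadratic Gauss sum $G(s)=\eta(s)\mathfrak g$, recognise the remaining $s$-sum as a Kloosterman sum (even $r$) or a Sali\'e sum (odd $r$), apply the Weil bound, and finish with Plancherel. Your handling of the degenerate cases ($t=0$ via the diagonal, $\|\bfm\|=0$ giving a harmless Gauss sum or $-1$) is fine.

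It is worth noting that the method the present paper uses for its matrix analogs (Theorems~\ref{thm:strong-Falconer-rank2}--\ref{thm:strong-Falconer-general-rank}) is genuinely different from yours: there one bounds $\widehat{\sigma}_T(\bfM)$ by inserting the pointwise estimate $|G(S,-M_i)|\ll q^{(n^2+\rho_S)/2}$ of Proposition~\ref{prop:matrix-Gauss} and summing \emph{trivially} over $S$, grouped by similarity class. Specialised to $n=1$ that route only yields $\widehat{\sigma}_t(\bfm)=O(q^{-r/2})$ and hence the threshold $\#E\gg q^{(r+2)/2}$, a $\sqrt q$ off the truth. The extra saving in your argument comes precisely from the Weil-type cancellation in the $s$-sum, which has no obvious analogue once $S$ ranges over a noncommutative matrix algebra; this is why the paper must cite Theorem~\ref{thm:IR} rather than deduce it from its own machinery.
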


An alternative proof, which bypassed Kloosterman sums, of Theorem \ref{thm:IR} was presented in \cite{AMM17}. 
The Erd\H{o}s-Falconer distance problems in vector spaces over finite fields exhibit the important philosophy of finite field models in arithmetic combinatorics \cite{Green05,Wolf15}. 
The work \cite{IR07} has stimulated many extensions and generalizations, albeit still in the context of vector spaces over finite fields (see, for instance, \cite{IK08,Vinh11EF,KS12,Dietmann13}).

There is a further direction that is worth inquiring: the generalization from commutativity to noncommutativity.
To our best knowledge, no noncommutative version of the Erd\H{o}s-Falconer distance problems has been formulated in the literature.
In this paper, we propose a matrix analog of the finite field Erd\H{o}s-Falconer distance problems as follows. Let $n$ and $r$ be positive integers. Let $\mn=\mnq$ denote the (noncommutative) algebra of $n\times n$ matrices over $\bF_q$.
On the free $\mn$-module $\mn^r$, consider the norm-like function 
$$
\|\bfX\|= \sum_{i=1}^r X_i^2 \in \mn 
$$
where $\bfX=(X_1,\dots,X_r)\in\mn^r$. On $\mn^r \times \mn^r$, define the distance-like function 
$$
d(\bfX,\bfY)=\|\bfX-\bfY\| 
\quad (\bfX,\bfY \in \mn^r).
$$
The matrix Erd\H{o}s distance problem seeks, for a subset $E\subseteq\mn^r$, the smallest possible cardinality of the `distance set' 
$$
\Delta(E)=\{d(\bfX,\bfY): \bfX,\bfY \in E\}.
$$
The matrix Falconer distance problem is to determine the smallest exponent $e=e(n,r)$ such that, if $r\geq 2$ and $E\subseteq \mn^r$ has cardinality at least $cq^{e}$ with $c$ sufficiently large, then $\#\Delta(E) \gg q^{n^2}$.

%======================
% \subsection{To do}
%======================
% \begin{enumerate}
%     \item In the main theorems, check the asymptotic dependency.
%     \item Check the affiliations and the email addresses carefully.
% \end{enumerate}

%----------------------

%======================
\subsection{New results}\label{subsect:new-results}
%======================

Our goal is to give a nontrivial lower bound for the cardinality of a subset such that its distance set is the whole matrix algebra. Our first two results achieve this goal when the matrix algebra has small rank $2$ or $3$.

\begin{thm}\label{thm:strong-Falconer-rank2}
    Let $\bF_q$ be a finite field of odd cardinality $q$.
    Let $E$ be a subset of $\left({\rm M}_2(\bF_q)\right)^r$ with $r\geq 4$.
    For every $\epsilon>0$, there exists a constant $c=c(\epsilon)>0$ such that
    $$\Delta(E)={\rm M}_2(\bF_q)$$
    whenever $(\# E) > cq^{3r+3+\epsilon}$.
\end{thm}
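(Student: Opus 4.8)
The plan is to show that every target matrix $T\in\mathrm{M}_2(\bF_q)$ occurs as a distance by estimating, for each $T$, the counting function $\nu(T)=\#\{(\bfX,\bfY)\in E\times E:\sum_{i}(X_i-Y_i)^2=T\}$ and proving $\nu(T)>0$ once $\#E>cq^{3r+3+\epsilon}$. First I would expand $\nu(T)$ by orthogonality of the additive characters of $\mathrm{M}_2(\bF_q)$ paired through the trace form, writing the indicator of $\{M=T\}$ as $q^{-4}\sum_{S}\psi(\mathrm{tr}(S(M-T)))$. The term $S=0$ produces the main term $(\#E)^2/q^{4}$, and the whole problem reduces to showing that the remaining sum over $S\neq 0$ is smaller than this in absolute value, uniformly in $T$ (uniformity is automatic once we pass to moduli, since the $T$-dependence sits only in $\psi(-\mathrm{tr}(ST))$).

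The heart of the matter is the analysis of the quadratic matrix Gauss sums attached to $S$. Setting $Q_S(Z)=\mathrm{tr}(SZ^2)$, its associated symmetric bilinear form is governed by the operator $L_S\colon Z\mapsto SZ+ZS$, since $Q_S(Z+Z')-Q_S(Z)-Q_S(Z')=\mathrm{tr}(L_S(Z)Z')$. Completing the square over $\bF_q$ (legitimate as $q$ is odd), the inhomogeneous Gauss sum $\sum_{Z}\psi(Q_S(Z)-\mathrm{tr}(WZ))$ has modulus exactly $q^{4-\mathrm{rank}(L_S)/2}$ when $W\in\mathrm{Im}(L_S)$ and vanishes otherwise. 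The crucial step is then to read off $\mathrm{rank}(L_S)$ and $\mathrm{Im}(L_S)$ from the conjugacy type of $S$: one finds $\mathrm{rank}(L_S)=4$ generically, $\mathrm{rank}(L_S)=3$ for diagonalizable rank-one $S$, and the decisive drop $\mathrm{rank}(L_S)=2$ precisely for the traceless invertible matrices and the nonzero nilpotents. For traceless $S$ the identity $SZ+ZS=\mathrm{tr}(SZ)\,I+(\mathrm{tr}Z)\,S$ shows $\mathrm{Im}(L_S)=\mathrm{span}\{I,S\}$, which pins down simultaneously the rank and the support of the Gauss sum.

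With these evaluations in hand I would Fourier-expand each factor $\psi(Q_S(X_i-Y_i))$ in the frequency matrices $\mathbf{W}=(W_1,\dots,W_r)$, so that the error term becomes $q^{-4(r+1)}\sum_{S\neq0}\psi(-\mathrm{tr}(ST))\sum_{\mathbf{W}}|\widehat{E}(\mathbf{W})|^2\prod_{i}\widehat{\psi\circ Q_S}(W_i)$, where $\widehat{E}(\mathbf{W})=\sum_{\bfX\in E}\psi(\sum_i\mathrm{tr}(W_iX_i))$. Passing to absolute values, bounding each Gauss sum factor by $q^{4-\mathrm{rank}(L_S)/2}$, and applying Plancherel $\sum_{\mathbf{W}}|\widehat{E}(\mathbf{W})|^2=q^{4r}\#E$ for each fixed $S$, the contribution of the rank-$k$ stratum is $O\!\left(q^{r(4-k/2)-4}\cdot\#\{S:\mathrm{rank}(L_S)=k\}\cdot\#E\right)$. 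The rank-four and rank-three strata are subdominant (they force only $\#E>q^{2r+4}$ and $\#E>q^{5r/2+3}$ respectively), so the decisive term is the rank-two stratum: there are $\Theta(q^{3})$ such $S$, each carrying a Gauss sum of modulus $q^{3}$, so this contribution is $O(q^{3r-1}\#E)$, smaller than the main term $(\#E)^2/q^{4}$ exactly when $\#E>q^{3r+3}$.

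I expect the main obstacle to be the exact evaluation of the matrix Gauss sums across all conjugacy types of $S$ over $\bF_q$ rather than over the algebraic closure, where one must keep track of the quadratic character of the relevant discriminants and, above all, of the degenerate locus on which many $S$ share the common image $\mathrm{span}\{I,S\}$, namely the scalar frequency directions $W_i\in\bF_q I$. While the clean Plancherel bound above already yields the exponent $3r+3$, I anticipate that the precise summation over this degenerate locus (together with the character-sum evaluation of $\sum_{S\neq0}\psi(-\mathrm{tr}(ST))\,G(S)^r$ at zero frequency, where $G(S)=\sum_Z\psi(Q_S(Z))$) is what forces the extra factor $q^{\epsilon}$ in the stated threshold. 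Finally, the hypothesis $r\geq4$ enters to guarantee $q^{3r+3}<q^{4r}$, so that the conclusion is not vacuous.
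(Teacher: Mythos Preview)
Your proposal is correct and follows essentially the same route as the paper: the Iosevich--Rudnev incidence expansion, reduction to the quadratic matrix Gauss sums $G(S,B)$, the bound $|G(S,B)|\le q^{(n^2+\rho_S)/2}$ via the radical of $Q_S$ (your $\mathrm{rank}(L_S)$ is exactly $4-\rho_S$), stratification of $S\in\mathrm{M}_2\setminus\{0\}$ by conjugacy type with the same case analysis as the paper's Table~\ref{rank-2-table}, and Plancherel; your direct expansion is equivalent to the paper's passage through $\widehat{\sigma}_T(\bfM)$. One clarification: your Plancherel bound already delivers the clean threshold $q^{3r+3}$, and the paper's proof does too---the $\epsilon$ in the theorem statement is only conservative phrasing, not something forced by the degenerate-locus summation you flag in the last paragraph.
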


\begin{thm}\label{thm:strong-Falconer-rank3}
    Let $\bF_q$ be a finite field of odd cardinality $q$.
    Let $E$ be a subset of $\left({\rm M}_3(\bF_q)\right)^r$ with $r\geq 3$.
    For every $\epsilon>0$, there exists a constant $c=c(\epsilon)>0$ such that
    $$\Delta(E)={\rm M}_3(\bF_q)$$
    whenever $(\# E) > cq^{7r+4+\epsilon}$.
\end{thm}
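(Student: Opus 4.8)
The plan is to transfer the Fourier-analytic second-moment argument underlying Theorem \ref{thm:strong-Falconer-rank2} to the algebra $\mathrm{M}_3(\bF_q)$, with the entire new difficulty concentrated in the evaluation of the relevant quadratic matrix Gauss sums. Fix a nontrivial additive character $\chi$ of $\bF_q$, pair matrices by $\operatorname{tr}(UX)$, and for $\bfM,\bfX\in\mn^r$ set $\langle\bfM,\bfX\rangle=\sum_{i}\operatorname{tr}(M_iX_i)$. Writing $\sigma_T$ for the indicator of the ``sphere'' $S_T=\{\bfX\in\mn^r:\|\bfX\|=T\}$ (here $n=3$) and expanding the counting function $\nu(T)=\#\{(\bfX,\bfY)\in E\times E: d(\bfX,\bfY)=T\}$ by Fourier inversion gives
$$\nu(T)=\frac{1}{q^{rn^2}}\sum_{\bfM}\widehat{\sigma_T}(\bfM)\,|\widehat{E}(\bfM)|^2 .$$
Isolating $\bfM=0$ produces the main term $|S_T|(\#E)^2q^{-rn^2}$, while Parseval bounds the remainder by $(\#E)\max_{\bfM\neq0}|\widehat{\sigma_T}(\bfM)|$. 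Thus it suffices to prove a uniform lower bound $\min_T|S_T|\gg q^{(r-1)n^2}$ and a uniform upper bound $\max_{\bfM\neq0}|\widehat{\sigma_T}(\bfM)|\ll q^{7r-5+\epsilon}$; since $q^{rn^2}\cdot q^{7r-5+\epsilon}/q^{(r-1)n^2}=q^{9}\cdot q^{7r-5+\epsilon}=q^{7r+4+\epsilon}$, the threshold $\#E>cq^{7r+4+\epsilon}$ then forces $\nu(T)>0$ for every $T$, i.e.\ $\Delta(E)=\mathrm{M}_3(\bF_q)$.

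Both estimates reduce to the matrix Gauss sum
$$G(A,M)=\sum_{X\in\mn}\chi\big(\operatorname{tr}(AX^2)+\operatorname{tr}(MX)\big),$$
via the identity $\widehat{\sigma_T}(\bfM)=q^{-n^2}\sum_{A}\chi(-\operatorname{tr}(AT))\prod_{i=1}^r G(A,M_i)$ and its specialization $M_i=0$ for $|S_T|$. The central step is the evaluation of $G(A,M)$. The quadratic form $X\mapsto\operatorname{tr}(AX^2)$ on $\mn\cong\bF_q^{n^2}$ has polarization $\operatorname{tr}\big((AX+XA)Y\big)$, so its radical is the kernel of the trace-selfadjoint anticommutator operator $L_A\colon X\mapsto AX+XA$. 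Diagonalizing the form and completing the square yields $|G(A,M)|=q^{n^2-\rho(A)/2}$ when $M\perp\ker L_A$ and $G(A,M)=0$ otherwise, where $\rho(A)=\operatorname{rank}L_A$. I would record this as the key lemma; its proof is the standard Gauss-sum computation once $\rho(A)$ is pinned down.

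The substantive work is then to determine $\rho(A)$, equivalently $\dim\ker L_A$, for every $A\in\mathrm{M}_3(\bF_q)$ and to count matrices of each type. Over $\overline{\bF_q}$ the operator $L_A$ has eigenvalues $\lambda_i+\lambda_j$ ranging over pairs of eigenvalues of $A$, so for semisimple $A$ one reads $\dim\ker L_A$ off the number of eigenvalue pairs summing to zero; the delicate classes are the non-semisimple ones, where $L_A$ acquires a nonzero nilpotent part so the naive pair-count fails and $\rho(A)$ must be extracted from the invariant factors of the Sylvester (anticommutator) operator, all while respecting the $\bF_q$-rationality of the eigenvalue configurations (Frobenius orbits in the quadratic and cubic extensions). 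I would stratify $\mathrm{M}_3(\bF_q)\setminus\{0\}$ into its finitely many rational canonical types, tabulate $\rho(A)$ and the size of each class, and substitute into $q^{-n^2}\sum_A\prod_i|G(A,M_i)|$. For the lower bound on $|S_T|$, each stratum contributes $(\#\text{class})\,q^{r(n^2-\rho/2)-n^2}$, and because the small-$\rho$ strata are cut out by correspondingly many conditions (hence few in number), the whole sum over $A\neq0$ is dominated by $q^{(r-1)n^2}$ once $r\geq3$. For the upper bound on $\widehat{\sigma_T}(\bfM)$ one uses crucially that the compatibility constraint $M_i\perp\ker L_A$ forces $\ker L_A\subseteq\bigcap_i M_i^{\perp}$, so the numerous large-kernel $A$ that would otherwise dominate are ruled out once $\bfM\neq0$; balancing the weight $q^{r(n^2-\rho/2)}$ against these compatibility-restricted class counts isolates the extremal stratum and yields $q^{7r-5+\epsilon}$.

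The main obstacle I anticipate is precisely this rank-and-count bookkeeping: for $3\times3$ matrices there are genuinely more similarity types than in the rank-$2$ setting of Theorem \ref{thm:strong-Falconer-rank2}, the non-semisimple classes disobey the eigenvalue-pair heuristic, and the interaction between the compatibility condition and the class sizes must be tracked carefully to avoid overcounting. The $q^{\epsilon}$ loss should enter only through the crude point counts used to bound the sizes of these finitely many strata; everything else is the standard machinery already behind Theorem \ref{thm:IR} and Theorem \ref{thm:strong-Falconer-rank2}.
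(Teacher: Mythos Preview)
Your plan is correct and follows essentially the same architecture as the paper: the Iosevich--Rudnev Fourier decomposition \eqref{eq:IR:incidence}, reduction to the Gauss sums $G(A,M)$, and stratification of $A\in{\rm M}_3\setminus\{0\}$ by similarity type to bound $\sum_{A\neq 0}q^{r(n^2+\dim\ker L_A)/2}$; the extremal stratum is the nilpotent class $0^{(2,1)}$ with $\dim\ker L_A=5$, giving the $q^{7r+4}$ that produces the threshold.

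Two small differences are worth noting. First, your elementary diagonalize-and-complete-the-square evaluation of $|G(A,M)|$ is sharper and simpler than what the paper actually invokes (Proposition~\ref{prop:matrix-Gauss} via Deligne's bound), and it recovers Kuroda's exact value $|G(A,0)|=q^{(n^2+\dim\ker L_A)/2}$ directly. Second, the compatibility constraint $M_i\in(\ker L_A)^\perp$ that you flag as ``crucial'' is in fact only needed to kill the single term $A=0$; for all $A\neq 0$ the paper simply uses the uniform bound $|G(A,M)|\le q^{(n^2+\dim\ker L_A)/2}$ and the resulting sum over the sixteen quadratic class types in Table~\ref{rank-3-table} already yields $O(q^{7r+4})$, with no further restriction on $A$ coming from $\bfM$. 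So the ``balancing against compatibility-restricted class counts'' you anticipate is unnecessary here, and no genuine $q^{\epsilon}$ loss occurs in the stratum counts---the $\epsilon$ in the statement only serves to convert the asymptotic $\gg$ into a clean inequality with an explicit constant.
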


Our next result concerns a matrix algebra of general high rank.

\begin{thm}\label{thm:strong-Falconer-general-rank}
    Let $\bF_q$ be a finite field of odd cardinality $q$.
    Let $E$ be a subset of $\left({\rm M}_n(\bF_q)\right)^r$ with $n\geq 3$ and $r\geq 3$.
    For every $\epsilon>0$, there exists a constant $c=c(\epsilon)>0$ such that
    $$\Delta(E)={\rm M}_n(\bF_q)$$
    whenever $(\# E) > cq^{rn^2-(r-2)(n-1)+\epsilon}$.
\end{thm}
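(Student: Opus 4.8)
The plan is to adapt the Fourier-analytic method behind Theorem \ref{thm:IR}, arranging matters so that the entire effect of noncommutativity is concentrated in a single family of quadratic matrix Gauss sums. Fix a nontrivial additive character $\chi$ of $\bF_q$, and for $\bfS=(S_1,\dots,S_r)$ and $\mathbf{Z}=(Z_1,\dots,Z_r)$ in $\mn^r$ set $\langle\bfS,\mathbf{Z}\rangle=\sum_{i=1}^r\mathrm{tr}(S_iZ_i)$, so that $\bfS\mapsto\chi(\langle\bfS,\cdot\rangle)$ ranges over all additive characters of $\mn^r$. For a target $T\in\mn$ I would study the counting function
$$
\nu(T)=\#\{(\bfX,\bfY)\in E\times E:\ \|\bfX-\bfY\|=T\}=\sum_{\bfX,\bfY\in E}\mathbf{1}[\bfX-\bfY\in\cS_T],
\qquad \cS_T=\{\mathbf{Z}\in\mn^r:\ \|\mathbf{Z}\|=T\},
$$
and it suffices to prove $\nu(T)>0$ for every $T$, since this forces $T\in\Delta(E)$. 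The key point is that the convolution here is over the \emph{abelian} group $(\mn^r,+)$, so noncommutativity intervenes only through the set $\cS_T$ on which the summation is supported.

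Writing $\widehat E(\bfS)=\sum_{\bfX\in E}\chi(\langle\bfS,\bfX\rangle)$ and opening the sphere on the Fourier side, I obtain the exact identity
$$
\nu(T)=\frac{1}{q^{rn^2}}\sum_{\bfS\in\mn^r}|\widehat E(\bfS)|^2\,\widehat{\cS_T}(\bfS),
\qquad
\widehat{\cS_T}(\bfS)=\sum_{\mathbf{Z}\in\cS_T}\chi\big(-\langle\bfS,\mathbf{Z}\rangle\big).
$$
The contribution of $\bfS=0$ is the main term $q^{-rn^2}(\#E)^2\,\#\cS_T$. For the remaining terms I would apply the triangle inequality, extract $\max_{\bfS\neq0}|\widehat{\cS_T}(\bfS)|$, and invoke Plancherel, $\sum_{\bfS}|\widehat E(\bfS)|^2=q^{rn^2}\#E$, bounding the error by $\big(\max_{\bfS\neq0}|\widehat{\cS_T}(\bfS)|\big)\,\#E$. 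Hence $\nu(T)>0$ provided
$$
\#E\ >\ \frac{q^{rn^2}}{\#\cS_T}\,\max_{\bfS\neq0}\big|\widehat{\cS_T}(\bfS)\big|.
$$

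Two estimates then close the argument. First, a uniform lower bound $\#\cS_T\gg q^{(r-1)n^2}$ holding for \emph{every} $T\in\mn$; this I would get by writing $\#\cS_T=q^{-n^2}\sum_{A\in\mn}\chi(-\mathrm{tr}(AT))\prod_{i=1}^rG(A,0)$, isolating the main term $q^{(r-1)n^2}$ at $A=0$, and controlling the rest (the condition $r\ge3$ ensures the main term dominates uniformly in $T$, including singular $T$). Here
$$
G(A,B)=\sum_{Z\in\mn}\chi\big(\mathrm{tr}(AZ^2)-\mathrm{tr}(BZ)\big)
$$
is the quadratic matrix Gauss sum analyzed in the earlier sections. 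Second, and decisively, the sphere transform factorizes as
$$
\widehat{\cS_T}(\bfS)=\frac{1}{q^{n^2}}\sum_{A\in\mn}\chi(-\mathrm{tr}(AT))\prod_{i=1}^rG(A,S_i),
$$
so everything reduces to the key bound $\max_{\bfS\neq0}|\widehat{\cS_T}(\bfS)|\ll q^{(r-1)n^2-(r-2)(n-1)+\epsilon}$. Substituting both estimates into the displayed inequality gives precisely $\#E>cq^{rn^2-(r-2)(n-1)+\epsilon}$, as required; at $n=3$ this recovers the exponent $7r+4$ of Theorem \ref{thm:strong-Falconer-rank3}.

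The crux, and the step I expect to be hardest, is this last Gauss sum estimate. The quadratic form $Z\mapsto\mathrm{tr}(AZ^2)$ on $\mn\cong\bF_q^{n^2}$ has radical $\{Z:AZ+ZA=0\}$, so $G(A,B)$ vanishes unless $B$ annihilates this radical, in which case $|G(A,B)|=q^{(n^2+d_A)/2}$ with $d_A=\dim\{Z:AZ+ZA=0\}$. I would stratify the sum over $A$ by $\mathrm{rank}(A)$, equivalently by $d_A$: on the more degenerate strata the product $\prod_i|G(A,S_i)|$ grows like $q^{r(n^2+d_A)/2}$, but for $\bfS\neq0$ the simultaneous orthogonality conditions force every contributing radical to lie inside the proper subspace $V_{\bfS}=\{Z:\mathrm{tr}(S_iZ)=0\ \forall i\}$, which kills the $A=0$ term and thins out the high-corank strata that would otherwise reproduce the trivial size $\#\cS_T$. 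The essential work is then to count, on each stratum, the matrices $A$ of prescribed rank whose radical is subordinate to $V_{\bfS}$, and to balance this count against the growth of the Gauss sums; optimizing the resulting exponent over the strata is what produces the savings $q^{-(r-2)(n-1)}$. Carrying out these degenerate-stratum counts and Gauss sum evaluations uniformly in $T$ is exactly the quadratic matrix Gauss sum analysis flagged in the abstract, and constitutes the technical heart of the proof.
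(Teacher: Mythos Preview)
Your Fourier-analytic framework is exactly the paper's: the same incidence function, the same factorization of $\widehat{\cS_T}$ into a sum over $A\in\mn$ of products of quadratic matrix Gauss sums, and the same endgame via Plancherel. Your identification of the key parameter as the radical dimension $d_A=\dim\{Z:AZ+ZA=0\}=\rho_A$ is also correct and matches Proposition~\ref{prop:matrix-Gauss}.

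The gap is in your proposed stratification. You write ``stratify the sum over $A$ by $\mathrm{rank}(A)$, equivalently by $d_A$,'' but these are \emph{not} equivalent: $\rho_A$ is governed not by $\mathrm{rank}(A)$ but by the pairs of Jordan eigenvalues $\alpha_i,\alpha_j$ with $\alpha_i+\alpha_j=0$ (see \eqref{eq:radical-invariant}). An invertible $A$ with eigenvalues $\{1,2,\dots,n\}$ has $\rho_A=0$, while an invertible $A$ with eigenvalues $\{\pm 1,\pm 2,\dots\}$ has $\rho_A\geq n$; conversely a generic rank-one matrix has $\rho_A$ close to $n^2$. So a rank stratification cannot isolate the dangerous matrices, and your subsequent heuristic about ``high-corank strata'' does not track the actual growth of $\prod_i|G(A,S_i)|$.

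What the paper does instead is introduce the \emph{quadratic class type} (Section~\ref{subsect:3-quad-types}), a refinement of Green's class type that separates the components of the cycle type of $A$ according to whether the irreducible factor is $T$, satisfies $\zeta(-T)=\pm\zeta(T)$, or is a polynomial in $T^2$. This invariant determines $\rho_A$ exactly, and Lemmas~\ref{lem:quad-type-0-nontriv}--\ref{lem:quad-type-neg1} bound $\rho_\tau$ and the number of matrices of each quadratic class type $\tau$ separately; the worst case is the nilpotent component $0^\alpha$ with $\alpha\neq 1^{(n)}$, giving $\rho_\tau\leq n^2-2n+2$. Summing $q^{r\rho_\tau/2}\cdot(\text{class size})\cdot(\text{number of cycle types})$ over all nontrivial $\tau$ yields Lemma~\ref{lem:gauss-sum-general-rank}, which is exactly your target bound on $\max_{\bfS\neq 0}|\widehat{\cS_T}(\bfS)|$. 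Note in particular that the paper never uses the vanishing of $G(A,B)$ when $B$ fails to annihilate the radical; the uniform bound $|G(A,B)|\ll q^{(n^2+\rho_A)/2}$ of Proposition~\ref{prop:matrix-Gauss} already suffices once the sum over $A$ is organized correctly.
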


\begin{rem}
    Theorem \ref{thm:strong-Falconer-rank3} can be derived as a corollary of Theorem \ref{thm:strong-Falconer-general-rank}. However, we will prove Theorem \ref{thm:strong-Falconer-rank3} before Theorem \ref{thm:strong-Falconer-general-rank}. We believe that the proof of the former helps one familiarize with various ingredients, such as invariants of quadratic cycle types, that go into the proof of the latter. 
\end{rem}

%======================
\subsection{}\label{subsect:organization}
%======================

The paper is organized as follows.
In Section \ref{sect:prelim}, we collect relevant notions and results concerning discrete Fourier analysis and similarity classes of matrices over a finite field.
Section \ref{sect:Gauss-sums} studies a matrix analog of the classical quadratic Gauss sums.  
We then analyze `matrix spheres' over a finite field, thereby proving Theorems \ref{thm:strong-Falconer-rank2} and \ref{thm:strong-Falconer-rank3} in Section \ref{sect:proofs-ranks-23} and Theorem \ref{thm:strong-Falconer-general-rank} in Section \ref{sect:proofs-general-rank}.

%\cite{HIS07}

%==================================
\section{Preliminaries}\label{sect:prelim}
%==================================

%======================
\subsection{Discrete Fourier analysis on matrices}\label{subsect:2-Fourier}
%======================

In this section we recall basic notions of discrete Fourier analysis on matrix spaces over a finite field.

Let $\bF_q$ be a finite field with $q=p^g$ elements. 
Let $\mn:=\mnq$ be the algebra of $n\times n$ matrices over $\bF_q$.
Denote by ${\rm Tr}_{\bF_q/\bF_p}$ the standard trace map.
The set $\mn$ with matrix addition is a finite abelian group and has a nontrivial additive character
$$
\psi(X)=\exp\(\frac{2\pi i}{p} \, {\rm Tr}_{\bF_q/\bF_p}\({\rm Tr} \, X\)\) 
    \quad (X\in \mn)
$$
where ${\rm Tr}\,X$ is the matrix trace of $X$. 
Let $\widehat{\rm M}_n$ denote the Pontryagin dual of $\mn$, i.e.~the group of all additive characters of $\mn$. The map 
$$
\mn \to \widehat{\rm M}_n, A\mapsto\psi_A
$$
where $\psi_A(X)=\psi(AX)\, (X\in\mn)$, is a group isomorphism and gives a parametrization of $\widehat{\rm M}_n$. One has the orthogonal relation
$$
\frac{1}{q^{n^2}}\sum_{A\in\mn} \psi(AX) = \delta_0(X)
    \quad (X\in\mn)
$$
where $\delta_0(X)$ equals $1$ if $X=0$ and equals $0$ otherwise.

Let $r$ be a positive integer. The free $\mn$-module $\mn^r$ is equipped with the dot product
$$ \bfA \cdot \bfB = \sum_{i=1}^r A_iB_i
$$
where $\bfA=(A_1,\dots,A_r) \in \mn^r$ and $\bfB=(B_1,\dots,B_r)\in \mn^r$.
The Fourier transform of a complex-valued function $f$ on $\mn^r$ is given by
$$
\Ff(\bfM) = \frac{1}{q^{rn^2}} \sum_{\bfA\in\mn^r} 
    f(\bfA) \overline{\psi}(\bfM\cdot \bfA) 
    \quad (\bfM\in\mn^r).
$$
The Fourier inversion formula expands $f$ in terms of $\Ff$:
$$
f(\bfA)= \sum_{\bfM\in\mn^r}  \Ff(\bfM) \psi(\bfM\cdot \bfA).
$$
The Plancherel formula relates the $L^2$-norms of $f$ and $\Ff$:
$$
\frac{1}{q^{rn^2}} \sum_{\bfA\in\mn^r} |f(\bfA)|^2
    = \sum_{\bfM\in\mn^r} |\Ff(\bfM)|^2.
$$

%======================
\subsection{Similarity of matrices}
\label{subsect:2-sim}
%======================

In this section, let $k$ be an arbitrary field and consider the conjugate action of ${\rm GL}_n(k)$ on ${\rm M}_n(k)$. Two matrices in the same orbit are said to be \emph{similar}; each orbit is called a \emph{similarity class}.

%======================
\subsubsection*{Rational canonical forms}
\label{sssect:2-RCF}
%======================

The theory of rational canonical forms asserts that a square matrix over $k$ is classified up to conjugation by a sequence of monic polynomials 
$$f_1(T),\dots,f_s(T) \in k[T]$$ 
satisfying $f_{i+1}(T)|f_i(T)$ for all $1\leq i<s$.
More precisely, for a monic polynomial $f(T)\in k[T]$ define the companion matrix $C_f$ as follows. If $f(T)=T+c$, set $C_f=(-c)$. If $f(T)=T^r+c_{r-1}T^{r-1}+\cdots+c_1T+c_0$, let $C_f$ be the $r\times r$ matrix 
$$
C_f=\begin{pmatrix}  
    0 & 0 & \cdots & 0 &   -c_0 \\
    1 & 0 & \cdots & 0 &  -c_1 \\
    0 & 1 & \cdots & 0 &  -c_2 \\
    \cdots \\
    0 & 0 & \cdots & 1 &  -c_{r-1} \\
\end{pmatrix};
$$
this is the matrix corresponding to the `multiplication by $T$' linear transformation on the vector space $\frac{k[T]}{(f(T))}$. 
Any matrix $A\in {\rm M}_n(k)$ is conjugate (by a matrix in ${\rm GL}_n(k)$) to a block diagonal matrix 
$$ R_A = 
\begin{pmatrix}  
    C_{f_1} & 0 & \cdots & 0 \\
    0 & C_{f_2} & \cdots & 0 \\
    \cdots \\
    0 & 0 & \cdots & C_{f_{s}} 
\end{pmatrix}
$$
where $f_{i+1}(T)|f_i(T)$ for all $1\leq i<s$.
The monic polynomials $\{f_i(T): 1\leq i\leq s\}$ are the \emph{invariant factors} of $A$; the matrix $R_A$ is the \emph{rational canonical form} of $A$. Two matrices in ${\rm M}_n(k)$ are similar if and only if they have the same invariant factors. We refer the reader to \cite[Section 9.2]{Rotman10} for a thorough and clear exposition of the theory of rational canonical forms.

%======================
\subsubsection*{Cycle types}
\label{sssect:2-cycle-type}
%======================

Similarity classes of matrices over a field can also be described in terms of cycle types as follows. Let ${\rm Irr}(k[T])$ denote the set of monic irreducible polynomials in $k[T]$. For any matrix $A\in \mn(k)$, one defines a $k[T]$-module structure on $k^n$ by $T\cdot v = Av$ for $v\in k^n$; this is called the \emph{operator module} associated to $A$ and denoted by $M_A$. The operator modules $M_A$ and $M_B$ are isomorphic if and only if the matrices $A$ and $B$ are similar. For $\pi \in {\rm Irr}(k[T])$, the $\pi$-primary part of $M_A$ is 
$$
M_{A,\pi}=\{v\in M_A: \pi^rv=0 \,\, \text{for some} \,\, r\in\bN\}.
$$
There is a decomposition
$$
M_{A,\pi} \cong \bigoplus_i \frac{k[T]}{(\pi^{\lambda_{\pi,i}})}
$$
where the $\lambda_{\pi,i}$ are positive integers.
Let $\Lambda$ denote the set of all partitions. 
The \emph{cycle type} of $A$ is the map 
$$
\nu_A: {\rm Irr}(k[T]) \to \Lambda
$$
given by $\nu_A(\pi)=\lambda_{\pi}=(\lambda_{\pi,i})_i$. 
Put $|\lambda_{\pi}|:=\sum_i \lambda_{\pi,i}$.
One can express the cycle type of $A$ as a formal product
$$
\nu_A = \prod_{\pi} \pi^{\lambda_{\pi}}.
$$
Define the \emph{degree of the formal product} $\prod_{\pi} \pi^{\lambda_{\pi}}$ to be $\sum_{\pi} \deg(\pi)|\lambda_{\pi}|$; note that the degree of the cycle type $\nu_A$ is nothing but $n$.
The cycle type $A\mapsto \nu_A$ gives a bijection between similarity classes in $\mn(k)$ and the set of all maps from ${\rm Irr}(k[T])$ to $\Lambda$ which have degree $n$.

%======================
\subsubsection*{Class types and centralizers}
\label{sssect:2-class-type}
%======================

Over a finite field $k=\bF_q$, Green \cite{Green55} introduced the notion of (similarity) class type. If $A\in\mnq$ has cycle type
$\nu_A = \pi_1^{\lambda_1} \cdots \pi_s^{\lambda_s}$ and for each $i$ the polynomial $\pi_i$ has degree $d_i$, the \emph{(similarity) class type} of $A$ is the formal product
$$
\tau_A = d_1^{\lambda_1} \cdots d_s^{\lambda_s}.
$$
Green \cite[Lemma 2.1]{Green55} discovered that the class type of a matrix determines its centralizer up to isomorphism. Furthermore, Britnell and Wildon \cite[Theorem 2.7]{BW11} showed that two matrices have the same class type if and only if their centralizers are conjugate (by an element of ${\rm GL}_n(\bF_q)$). Both of these results made essential use of the finite field assumption; see \cite{BW14} for a generalization to an arbitrary field. 

From the class type of a matrix, one can compute the size of its centralizer, thanks to the classical works of Kung \cite{Kung81}, Stong \cite{Stong88}, and Fulman \cite{Fulman99}. Let $A\in\mnq$ have class type
$$
\tau_A = d_1^{\lambda_1} \cdots d_s^{\lambda_s}.
$$
Dropping the subscript, we write $d^{\lambda}$ for an arbitrary component $d_i^{\lambda_i}$ of $\tau_A$. 

For a partition $\lambda$ of a positive integer into parts $\lambda_{1}\geq\lambda_2\geq \cdots$, 
set
\begin{align*}
    |\lambda| &= \sum_i \lambda_i, \\
    |\lambda|_2 &= \sum_i \lambda_i^2. 
\end{align*}
Let $m_j(\lambda)$ be the number of parts in the partition $\lambda$ which are equal to $j$. 
The conjugate partition $\lambda'$ of $\lambda$ has its $j^{\rm th}$ part given by
$$
\lambda'_j =  m_j(\lambda) + m_{j+1}(\lambda) + \cdots .
$$
Write
$$
(1/x)_r=\prod_{j=1}^r\(1-1/x^j\).
$$
Define (cf.~\cite[Section 2]{Fulman99}, \cite[Theorem 9.5]{PSS15})
\begin{align}
c\(d^\lambda\) 
    &= q^{d |\lambda'|_2} \prod_j (1/q^d)_{m_j(\lambda)}.  \label{eq:cent-size-1} 
    \\
    &= q^{d \sum_{j}(\lambda'_j)^2}
    \prod_j (1/q^d)_{m_j(\lambda)}.   \nonumber
\end{align}
Then the centralizer in ${\rm GL}_n(\bF_q)$ of $A$ has size 
\begin{equation}\label{eq:cent-size-2}
c(\tau_A)= c\(d_1^{\lambda_1} \cdots d_s^{\lambda_s}\)
    = \prod_{i=1}^s c\(d_i^{\lambda_i}\).
\end{equation}

%======================
%\subsection{Sylvester's equation}\label{subsect:2-Syl}
%======================

%==================================
\section{Quadratic matrix Gauss sums}\label{sect:Gauss-sums}
%==================================

Let $p$ be an odd prime and $\bF_q$ be a finite field with $q=p^g$ elements. 
Write $\mn=\mnq$ for the algebra of $n\times n$ matrices over $\bF_q$.
For $A,B\in\mn$, define the quadratic matrix Gauss sum 
\begin{equation}\label{eq:quadratic-matrix-Gauss-sum}
G(A,B) = \sum_{X\in\mn} \psi(AX^2+BX).    
\end{equation}

%======================
\subsection{The trace quadratic form}\label{subsect:31}
%======================

For $A\in\mn$, the trace form ${\rm Tr}(AX^2)$ defines a quadratic space $(V_A,Q_A)$ of dimension $n^2$ over $\bF_q$. In a series of papers \cite{Kuroda97,Kuroda99,Kuroda04}, Kuroda studied this `trace quadratic form' and applied it to compute $G(A,B)$ in the special case $B=0$. 

Suppose that $A$ is conjugate by an element of ${\rm GL}_n\(\overline{\bF}_q\)$ to a Jordan canonical form 
$$ 
\begin{pmatrix}  
    A_{1} & 0 & \cdots & 0 \\
    0 & A_{2} & \cdots & 0 \\
    \cdots \\
    0 & 0 & \cdots & A_{r} 
\end{pmatrix}
$$
where $A_i$ is the $n_i \times n_i$ Jordan block with eigenvalue $\alpha_i$. The radical of $V_A$, denoted ${\rm rad}\,A$, has dimension \cite[Lemma 3]{Kuroda04}
\begin{equation}\label{eq:radical-invariant}
\rho_A = \sum_{\substack{1\leq i,j \leq r \\ \alpha_i+\alpha_j=0}}
    \min\(n_i,n_j\).
\end{equation}
Let us call $\rho_A$ the \emph{radical invariant} of the matrix $A$.

\begin{prop}\label{prop:Kuroda}\cite[Lemma 4]{Kuroda04} 
Suppose that the characteristic polynomial of $A$ has a factorization into irreducible polynomials
$$
P_A(T) = T^{c} \cdot \prod_{i=1}^{r} \pi_i\(T^2\)^{a_i}
    \cdot \prod_{j=1}^{s}\zeta_j(T)^{b_j}
$$
where for each $1\leq j\leq s$ the polynomial $\zeta_j(T)$ is not a polynomial in $T^2$.
%For $1\leq j\leq s$, let $B_j=C_{\zeta_j}$ be the companion matrix associated to $\zeta_j$.
Then one has an orthogonal decomposition
$$
V_A \cong W_{\zeta_1}^{b_1} \perp \cdots  \perp W_{\zeta_s}^{b_s} \perp H \perp {\rm rad} \,A
$$
where $W_{\zeta_j}$ is a regular subspace of $V_A$ depending on $\zeta_j(T)$ with dimension 
$$ \dim \, W_{\zeta_j} = \deg\, \zeta_j(T) \quad (1\leq j \leq s),
$$
and $H$ is a hyperbolic space.
\end{prop}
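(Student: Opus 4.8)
The plan is to reduce the asserted isometry to two invariants — the dimension of the radical and the discriminant of the regular part — after producing an explicit orthogonal splitting of $(V_A,Q_A)$ along the primary decomposition of $A$. Since $X\mapsto PXP^{-1}$ gives $(V_A,Q_A)\cong(V_{PAP^{-1}},Q_{PAP^{-1}})$, I may assume $A$ is block diagonal along the primary decomposition $\bF_q^n=\bigoplus_\pi V_\pi$ of its operator module, with $\pi$ running over the irreducible factors of $P_A$. First I would record that the polarization of $Q_A$ is $B_A(X,Y)=\tfrac12{\rm Tr}\big(Y(AX+XA)\big)$, so that ${\rm rad}\,A=\ker S_A$ for the Sylvester operator $S_A(X)=AX+XA$; its dimension is $\rho_A$ by the cited Lemma~3. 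Writing $\mn={\rm End}(\bF_q^n)=\bigoplus_{\pi,\pi'}{\rm Hom}(V_{\pi'},V_\pi)$, a short trace computation shows $B_A$ pairs ${\rm Hom}(V_{\pi'},V_\pi)$ only with ${\rm Hom}(V_\pi,V_{\pi'})$, so the decomposition is orthogonal once one groups each diagonal block ${\rm End}(V_\pi)$ and each unordered off-diagonal pair.

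Next I would classify the blocks by the restricted operator $P\mapsto A_\pi P+PA_{\pi'}$, which is invertible exactly when no root of $\pi$ is the negative of a root of $\pi'$. For $\pi\neq\pi'$ the two summands ${\rm Hom}(V_{\pi'},V_\pi)$ and ${\rm Hom}(V_\pi,V_{\pi'})$ are totally isotropic, so when the operator is invertible the block is a perfect pairing of two totally isotropic $\bF_q$-spaces, hence hyperbolic. The diagonal blocks are the interesting ones. If $\pi=\zeta_j$ is not a polynomial in $T^2$, then no root of $\zeta_j$ is the negative of another, $S_{A_{\zeta_j}}$ is invertible, and ${\rm End}(V_{\zeta_j})$ is regular; this is the source of $W_{\zeta_j}$. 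I would exhibit the latter concretely in the multiplicity-one case: with $K=\bF_q[T]/(\zeta_j)$ and $\theta\in K$ the image of $T$, the multiplication embedding $K\hookrightarrow{\rm End}_{\bF_q}(K)$ carries $Q_A$ to $x\mapsto{\rm Tr}_{K/\bF_q}(\theta x^2)$, a regular $\deg\zeta_j$-dimensional form (as $\theta\neq0$ and the trace pairing of the separable extension is nondegenerate), and the grading ${\rm End}_{\bF_q}(K)=\bigoplus_{\sigma\in{\rm Gal}(K/\bF_q)}K_\sigma$ into $\sigma$-semilinear parts shows the identity component is $W_{\zeta_j}$ while the remaining components pair off hyperbolically. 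The leftover blocks — off-diagonal negation partners $\pi'=\widetilde\pi$, and diagonal blocks with $\pi=T$ or $\pi=\pi_i(T^2)$ even — are precisely where $S_A$ degenerates, so they feed the radical and produce no new $W$-term; that their regular parts carry no anisotropic directions beyond the stated ones I would certify through the global count below rather than block by block.

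This is where the finite-field classification enters: every regular quadratic space over $\bF_q$ is determined up to isometry by its dimension and discriminant. Having split off $\perp_j W_{\zeta_j}^{b_j}$ and ${\rm rad}\,A$, I am left with a regular space $U$ of dimension $n^2-\rho_A-\sum_j b_j\deg\zeta_j$, and it remains to show $U$ is hyperbolic, i.e.\ that $\dim U$ is even and ${\rm disc}\,U$ is the hyperbolic class — equivalently that ${\rm disc}\big(V_A^{\rm reg}\big)=\prod_j{\rm disc}(W_{\zeta_j})^{b_j}$ times the hyperbolic discriminant. This single discriminant identity, together with the parity of $\dim U$, forces the isometry.

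The main obstacle is exactly this discriminant bookkeeping, aggravated by the higher-multiplicity case $b_j>1$, where $A_{\zeta_j}$ need not be semisimple and the clean Galois grading of ${\rm End}(K)$ must be replaced by an analysis on ${\rm End}(V_{\zeta_j})$ for a module over $\bF_q[T]/(\zeta_j^{m})$. I expect to treat both uniformly through the discriminant. Up to squares ${\rm disc}\big(V_A^{\rm reg}\big)$ equals the restricted determinant $\prod_{\alpha_i+\alpha_j\neq0}(\alpha_i+\alpha_j)$ of $S_A$ times the discriminant of the trace pairing on $\mn$; the off-diagonal factors occur as $(\alpha_i+\alpha_j)(\alpha_j+\alpha_i)$ and are squares, leaving the diagonal contribution $\prod_{\alpha_i\neq0}2\alpha_i$. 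Expressing this square class through the coefficients of $P_A$ and comparing it with $\prod_j{\rm disc}(W_{\zeta_j})^{b_j}=\prod_j\big(N_{K_j/\bF_q}(\theta_j)\,{\rm disc}(\zeta_j)\big)^{b_j}$ by standard resultant and norm identities should yield the required equality of Witt classes, after which the finite-field classification delivers the stated decomposition.
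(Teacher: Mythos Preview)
The paper does not supply a proof of this proposition: it is quoted verbatim as \cite[Lemma 4]{Kuroda04} and used as a black box. So there is no ``paper's own proof'' to compare against; you are attempting an independent proof of Kuroda's lemma.

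Your structural reductions are correct and well chosen: the conjugation invariance, the identification ${\rm rad}\,A=\ker S_A$ with $S_A(X)=AX+XA$, the orthogonal splitting of $(\mn,B_A)$ along the ${\rm Hom}(V_{\pi'},V_\pi)$ blocks, and the observation that an off-diagonal pair with $S_A$ invertible on it is a hyperbolic pair of totally isotropic subspaces. Your multiplicity-one analysis of the $\zeta_j$-block via the Galois grading of ${\rm End}_{\bF_q}(K)$ is also fine, and the reduction of the remaining isometry question to a single discriminant identity is legitimate over a finite field.

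The gap is exactly where you flag it: you do not carry out the discriminant computation, and in particular you do not establish that the $\zeta_j$-primary block in multiplicity $b_j>1$ contributes precisely $b_j$ orthogonal copies of $W_{\zeta_j}$ (rather than some other form of the same dimension) plus a hyperbolic complement. Your sketch --- pairing off the off-diagonal eigenvalue factors $(\alpha_i+\alpha_j)(\alpha_j+\alpha_i)$ as squares and reducing to $\prod_{\alpha_i\neq 0}2\alpha_i$ --- is the right shape, but several pieces are missing: the discriminant of the ambient trace pairing on $\mn$, the descent of the eigenvalue product from $\overline{\bF}_q$ to an $\bF_q^\times/(\bF_q^\times)^2$ class, the parity check that $\dim U$ is even, and the matching against $\prod_j{\rm disc}(W_{\zeta_j})^{b_j}$. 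Until those are written down and verified, the argument remains a plan rather than a proof; the phrase ``should yield the required equality'' is the tell.
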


%\begin{rem}Using Proposition \ref{prop:Kuroda}, Kuroda deduced an interesting formula for $G(A,B)$ when $B=0$ (cf.~\cite[Theorem 2]{Kuroda04}). \end{rem}

%======================
\subsection{Estimating quadratic matrix Gauss sums}\label{subsect:32}
%======================

Recall the following beautiful exponential sum estimate of Deligne \cite[Théorèm 8.4]{Deligne74} (see also \cite{Katz99}).

\begin{thm}[Deligne]\label{thm:Deligne}
Let $f(x_1,\dots,x_n)\in \bF_q[x_1,\dots,x_n]$ be a polynomial of degree $d$ written as $f=F_d+F_{d-1}+\cdots+F_0$ with $F_i$ homogeneous of degree $i$. Suppose that the degree 
$d$ is relatively prime to $q$, and that the locus $F_d=0$ is a nonsingular hypersurface in $\bP^{n-1}$. Then
$$
\left| \sum_{x \in \bF_q^n} \psi(f(x)) \right| 
    \leq (d-1)^n q^{\frac{n}{2}} .
$$
\end{thm}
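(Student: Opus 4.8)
Since this is Deligne's theorem, the only route I know passes through $\ell$-adic cohomology and the Weil conjectures, so my plan is to recast the exponential sum as an alternating trace of Frobenius and then control each cohomology group separately. Fix a prime $\ell\neq p$, and let $\cL_\psi$ be the Artin--Schreier sheaf on $\mathbb{A}^1_{\bF_q}$ attached to $\psi$, a rank-one lisse $\overline{\bQ}_\ell$-sheaf pure of weight $0$. Pulling back along $f\colon \mathbb{A}^n\to\mathbb{A}^1$ yields a lisse sheaf $\mathcal{F}=f^*\cL_\psi$ on $\mathbb{A}^n$, still pure of weight $0$, and the Grothendieck--Lefschetz trace formula identifies
$$
\sum_{x\in\bF_q^n}\psi(f(x)) = \sum_{i=0}^{2n}(-1)^i\,{\rm Tr}\bigl({\rm Frob}_q \mid H^i_c(\mathbb{A}^n_{\overline{\bF}_q},\mathcal{F})\bigr).
$$
The estimate is thereby reduced to two geometric inputs: showing that this alternating sum has only one surviving term, and bounding the dimension of that term.

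The first input is \emph{cohomological concentration}: I would show that $H^i_c(\mathbb{A}^n_{\overline{\bF}_q},\mathcal{F})=0$ for all $i\neq n$. Artin's affine vanishing theorem, applied to the dual sheaf and transported by Poincar\'e duality, kills the range $i<n$ for free; the substantive half is the vanishing for $i>n$, and this is exactly where the two hypotheses enter. Compactifying $\mathbb{A}^n\hookrightarrow\mathbb{P}^n$ with hyperplane at infinity $H_\infty\cong\mathbb{P}^{n-1}$, the sheaf $\mathcal{F}$ acquires ramification along $H_\infty$ whose wild part is governed by the pole of $f$, i.e.~by the leading form $F_d$. The assumption $\gcd(d,q)=1$ forces the Swan conductors to behave tamely, and the assumption that $\{F_d=0\}$ is a nonsingular hypersurface in $H_\infty=\mathbb{P}^{n-1}$ guarantees that the singularities of the compactified situation are as mild as possible, so that nothing contributes above the middle degree. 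Granting concentration, the sum equals $(-1)^n\,{\rm Tr}({\rm Frob}_q\mid H^n_c)$. The second input is the dimension count $\dim H^n_c(\mathbb{A}^n_{\overline{\bF}_q},\mathcal{F})=(d-1)^n$: since only one cohomology group is nonzero, this dimension equals $(-1)^n\chi_c(\mathbb{A}^n_{\overline{\bF}_q},\mathcal{F})$, and the Euler characteristic can be read off from the geometry of the smooth leading hypersurface of degree $d$ via an iterated Grothendieck--Ogg--Shafarevich / Milnor-number computation producing $(d-1)^n$.

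Finally I would invoke purity: Deligne's main theorem (Weil II) shows that $H^n_c$ of this weight-$0$ sheaf is mixed of weight $\leq n$, and in the concentrated situation it is in fact pure of weight $n$, so every eigenvalue of ${\rm Frob}_q$ on $H^n_c$ has absolute value exactly $q^{n/2}$. Combining with the dimension bound gives
$$
\left|\sum_{x\in\bF_q^n}\psi(f(x))\right| = \left|{\rm Tr}\bigl({\rm Frob}_q\mid H^n_c\bigr)\right| \leq \dim H^n_c\cdot q^{n/2} \leq (d-1)^n q^{n/2},
$$
as required. The main obstacle is unquestionably the pair of geometric inputs of the middle paragraph, namely the middle-degree concentration and the exact Euler-characteristic count, since these are precisely the places where the nonsingularity of $F_d$ and the coprimality $\gcd(d,q)=1$ must be converted into control of the ramification at infinity; by contrast, the trace formula and the purity theorem are formal machinery once those two facts are in hand.
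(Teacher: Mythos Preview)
The paper does not prove this statement at all; it merely quotes Deligne's result with a citation to \cite[Th\'eor\`eme 8.4]{Deligne74} and then applies it as a black box in the proof of Proposition~\ref{prop:matrix-Gauss}. So there is no ``paper's own proof'' to compare against.

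Your sketch is the standard $\ell$-adic route and is essentially Deligne's original argument. A couple of minor remarks: the inequality only needs that $H^n_c$ is \emph{mixed of weight $\leq n$}, which is what Weil~II gives directly; the stronger claim that it is pure of weight exactly $n$ is true here but requires an extra duality step and is not needed for the bound. Also, the cited reference is Weil~I (1974), where Deligne first proved this particular estimate before the general formalism of Weil~II; the argument there is organized slightly differently (via a Lefschetz pencil / fibration approach) but the essential inputs---nonsingularity of $\{F_d=0\}$ controlling the geometry at infinity, the Euler-characteristic computation yielding $(d-1)^n$, and the Riemann hypothesis over finite fields---are the same ones you identify.
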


We are in a position to deduce an upper bound for $G(A,B)$.

\begin{prop}\label{prop:matrix-Gauss}
Let $\rho_A$ denote the dimension of the radical ${\rm rad}\,A$ of $(V_A,Q_A)$.
    One has
    $$
    G(A,B) \ll  q^{\frac{n^2+\rho_A}{2}}.
    $$
\end{prop}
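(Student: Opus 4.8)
The plan is to recognize $G(A,B)$ as the exponential sum attached to the (possibly degenerate) trace quadratic form $Q_A$ together with a linear form, to split off the radical, and to apply Deligne's bound (Theorem \ref{thm:Deligne}) on the nondegenerate complement. First I would rewrite the summand in terms of the underlying quadratic space. Writing $\psi_0$ for the additive character $t\mapsto\exp\left(\frac{2\pi i}{p}{\rm Tr}_{\bF_q/\bF_p}(t)\right)$ of $\bF_q$, we have $\psi(Y)=\psi_0({\rm Tr}\,Y)$, whence
$$
G(A,B)=\sum_{X\in\mn}\psi_0\left(Q_A(X)+L_B(X)\right),
$$
where $Q_A(X)={\rm Tr}(AX^2)$ is the trace quadratic form on $V_A\cong\bF_q^{n^2}$ and $L_B(X)={\rm Tr}(BX)$ is a linear form. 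Thus $G(A,B)$ is exactly the exponential sum of a degree-$2$ polynomial in the $n^2$ entries of $X$, and the only obstruction to a direct application of Theorem \ref{thm:Deligne} is that the top-degree part $Q_A$ is singular precisely when $\rho_A>0$.

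To remove this obstruction I would invoke the orthogonal decomposition of Proposition \ref{prop:Kuroda}, which exhibits $V_A=U\perp{\rm rad}\,A$ with $U$ a regular quadratic space of dimension $n^2-\rho_A$. Writing $X=X'+X_0$ accordingly, with $X'\in U$ and $X_0\in{\rm rad}\,A$, the form $Q_A$ sees only $X'$: since ${\rm rad}\,A$ is orthogonal to $U$ and $Q_A$ vanishes on it (here one uses that $q$ is odd, so the radical of $Q_A$ coincides with the radical of the associated bilinear form $B_A$ and $Q_A(X_0)=\tfrac12 B_A(X_0,X_0)=0$), while the linear form splits as $L_B(X')+L_B(X_0)$. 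Consequently the sum factors as
$$
G(A,B)=\left(\sum_{X'\in U}\psi_0\left(Q_A(X')+L_B(X')\right)\right)\left(\sum_{X_0\in{\rm rad}\,A}\psi_0\left(L_B(X_0)\right)\right).
$$

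The second factor is a character sum over the subspace ${\rm rad}\,A$, hence equals $q^{\rho_A}$ when $L_B$ vanishes identically on ${\rm rad}\,A$ and $0$ otherwise; in either case its modulus is at most $q^{\rho_A}$. For the first factor, $Q_A|_U$ is a nondegenerate quadratic form in $m=n^2-\rho_A$ variables over $\bF_q$; since $q$ is odd its projective zero locus is a smooth quadric in $\bP^{m-1}$, so the top form satisfies the hypothesis of Theorem \ref{thm:Deligne} with $d=2$ and $\gcd(2,q)=1$. Deligne's estimate then bounds the first factor by $(2-1)^m q^{m/2}=q^{(n^2-\rho_A)/2}$ (the degenerate edge case $m=0$ being trivial). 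Multiplying the two bounds gives
$$
|G(A,B)|\le q^{(n^2-\rho_A)/2}\cdot q^{\rho_A}=q^{(n^2+\rho_A)/2},
$$
as claimed.

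The one step requiring genuine care is the reduction: one must check that the splitting of Proposition \ref{prop:Kuroda} really yields a regular complement $U$ on which $Q_A$ is nondegenerate, and that both $Q_A$ and $B_A$ vanish on ${\rm rad}\,A$, so that the sum over the radical decouples cleanly and contributes only the factor $q^{\rho_A}$; everything afterward is the standard Gauss-sum/Deligne estimate. As an aside, I expect that one could instead diagonalize $Q_A|_U$ over $\bF_q$ and evaluate the first factor as a product of $m$ classical one-variable quadratic Gauss sums, each of modulus $q^{1/2}$, thereby bypassing Deligne entirely and even recovering the explicit constant $1$.
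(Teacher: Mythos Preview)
Your argument is correct and follows essentially the same route as the paper: split off the radical via Proposition \ref{prop:Kuroda}, factor the sum, and invoke Deligne's bound on the nondegenerate piece. The only cosmetic difference is that the paper keeps the hyperbolic summand $H$ separate from the anisotropic part $W_A$ and bounds it by hand as $O(q^{h_A})$, whereas you (equivalently and slightly more economically) treat the entire regular complement $U=W_A\perp H$ at once with Theorem \ref{thm:Deligne}.
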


\begin{proof}
By Proposition \ref{prop:Kuroda}, the quadratic space $V_A$ has an orthogonal decomposition
\begin{equation}\label{eq-pf:quad-decomp}
V_A \cong W_A \perp H \perp {\rm rad}\,A    
\end{equation}
where $W_A$ is regular of dimension $w_A$, $H$ is a hyperbolic space of dimension $2h_A$, and the radical ${\rm rad}\,A$ has dimension $\rho_A$. Hence 
$$
n^2=w_A+2h_A+\rho_A.
$$

It follows from the decomposition \eqref{eq-pf:quad-decomp} that, after a change of variables, there exist:
\begin{itemize}
    \item a nonsingular homogeneous quadratic polynomial 
$$ f_2(\bfx) \in \bF_q[\bfx] $$ 
in $w_A$ variables $\bfx=(x_1,\dots,x_{w_A})$, 
    \item a linear form $f_1(\bfx) \in \bF_q[\bfx]$, 
    \item the quadratic polynomial 
    $$ g_2(\bfy,\bfz) =  \sum_{i=1}^{h_A} y_iz_i $$
    where $\bfy=(y_1,\dots,y_{h_A})$ and $\bfz=(z_1,\dots,z_{h_A})$,
    \item a linear form $g_1(\bfy,\bfz) \in \bF_q[\bfy,\bfz]$,
    \item a linear form $r_1(\bfu)\in \bF_q[\bfu]$ in $\rho_A$ variables $\bfu=(u_1,\dots,u_{\rho_A})$,
\end{itemize}
such that
$$
G(A,B) = 
    \( \sum_{\bfx\in \bF_q^{w_A}} \psi\( f_2(\bfx) +  f_1(\bfx) \) \)
    \( \sum_{\bfy,\bfz\in \bF_q^{h_A}} \psi\( g_2(\bfy,\bfz) +  g_1(\bfy,\bfz) \) \)
    \( \sum_{\bfu\in \bF_q^{\rho_A}} \psi\( r_1(\bfu) \) \).
$$
By virtue of Theorem \ref{thm:Deligne}, the first factor is $O(q^{\frac{w_A}{2}})$. It is plain that the second factor is $O(q^{h_A})$, and that the third factor is $O(q^{\rho_A})$.
Thus
$$
G(A,B) \ll q^{\frac{w_A}{2}+h_A+\rho_A} =  q^{\frac{n^2+\rho_A}{2}}.
$$
The proposition is proved.
\end{proof}

\begin{rem}
Proposition \ref{prop:matrix-Gauss} reduces the estimation of $G(A,B)$ to that of the radical invariant $\rho_A$. 
By \cite[Theorem 2]{Kuroda04}, we have $|G(A,0)|=q^{\frac{n^2+\rho_A}{2}}$ . 
An interesting problem is to evaluate $G(A,B)$ for all $A,B\in\mn$. 
\end{rem}

%======================
\subsection{Quadratic cycle types and quadratic class types}\label{subsect:3-quad-types}
%======================

%In order to facilitate our future analysis of incidence functions, we now introduce 
%the notion of quadratic cycle type.  
In this section, we introduce the notions of quadratic cycle type and quadratic class type, aiming to extract information about radical invariants most succinctly.

Let $\mn:=\mnq$ and $I:={\rm Irr}(\bF_q[T])$. Recall that the cycle type, or more generally the class type, of a matrix $A\in\mn$ encodes the size of its similarity class. The cycle type of $A$ can be expressed more refinedly as a four-part formal product
\begin{equation}\label{eq:quad-cyc-type}
\nu_A = T^\alpha \cdot
    \prod_\zeta \left( 
    \zeta(T)^{\beta_\zeta} 
    \zeta(-T)^{\gamma_\zeta} 
    \right) \cdot
    \prod_\pi \pi(T^2)^{\lambda_\pi} \cdot
    \prod_\eta \eta(T)^{\kappa_\eta} .    
\end{equation}
Here $\zeta$ varies in $I$ such that each $\zeta$ is not a polynomial in $T^2$, $\pi$ varies in $I$, and $\eta$ varies in $I$ such that any two (not necessarily distinct) irreducibles $\eta_1$ and $\eta_2$ satisfy $\eta_1(-T) \neq \eta_2(T)$.   
Correspondingly, the exponents  $\alpha,\beta_\zeta,\gamma_\zeta,\lambda_\pi,\kappa_\eta$ are partitions. We shall refer to \eqref{eq:quad-cyc-type} as the \emph{quadratic cycle type} of the matrix $A$. Forgetting the irreducible polynomials and retaining only their degrees, one arrives at the formal product
\begin{equation}\label{eq:quad-class-type}
\tau_A = 0^\alpha \cdot
    \prod_\zeta \left(  
    \deg(\zeta)^{\beta_\zeta} 
    \deg(\zeta)^{\gamma_\zeta} 
    \right)_{(+1)} \cdot
    \prod_\pi \left( \deg(\pi)^{\lambda_\pi} \right)_{(2)} \cdot
    \prod_\eta \left( \deg(\eta)^{\kappa_\eta} \right)_{(-1)},   
\end{equation}
called the \emph{quadratic class type} of the matrix $A$.

%======================
\subsection{Incidence functions}\label{subsect:incidence-functions}
%======================

We follow the argument of Iosevich and Rudnev \cite{IR07}.

Let $\mn:=\mnq$ and consider $E\subseteq\mn^r$.
Define the \emph{incidence function}
$$
\nu(T)=\frac{1}{(\# E)^2} 
    \#\{(\bfX,\bfY) \in E \times E: d(\bfX,\bfY)=T\}
    \quad (T\in\mn).
$$
For $T\in\mn$, the \emph{`matrix sphere' of `radius' $T$} is 
$$
\sigma_T=\{\bfX\in\mn^r:\|\bfX\|=T\} .
$$ 
On applying the Fourier inversion formula as in \cite[Section 2.2]{IR07}, we deduce that
\begin{equation}\label{eq:IR:incidence}
     \nu(T) = \frac{(\# \sigma_T)}{q^{rn^2}}
        + \frac{q^{2rn^2}}{(\# E)^2} \sum_{\bfM\neq 0} 
        \left| \widehat{E}(\bfM) \right|^2 \widehat{\sigma}_T(\bfM).
\end{equation}

\begin{lem}\label{lem:matrix-sphere}
If $T\in\mn$ and $\bfM\in \mn^r\setminus\{0\}$, then
    \begin{equation}
        \# \sigma_T
        = q^{n^2(r-1)} + 
        \frac{1}{q^{n^2}} 
        \sum_{S\in\mn\setminus\{0\}} 
            \psi\(-ST\) G(S,0)^r 
        \label{eq:IR:sphere-size}
    \end{equation} 
and
    \begin{equation}
        \widehat{\sigma}_T(\bfM)  
        = \frac{1}{q^{(r+1)n^2}} 
        \sum_{S\in\mn\setminus\{0\}} \psi(-ST) 
        \prod_{i=1}^r G(S,-M_i). 
        \label{eq:IR:FT-sphere-nonzero-phase}
    \end{equation} 
\end{lem}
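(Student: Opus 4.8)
The plan is to prove both identities by the same device: use the orthogonality relation $\frac{1}{q^{n^2}}\sum_{S\in\mn}\psi(SX)=\delta_0(X)$ to encode the condition $\|\bfX\|=T$ as the vanishing of $\|\bfX\|-T$, and then exploit the additivity of $\psi$ together with the identity $S\|\bfX\|=\sum_{i=1}^r SX_i^2$ to factor the resulting exponential sum across the $r$ coordinates into quadratic matrix Gauss sums.

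For \eqref{eq:IR:sphere-size}, I would start from $\#\sigma_T=\sum_{\bfX\in\mn^r}\delta_0(\|\bfX\|-T)$ and insert the orthogonality relation, obtaining
$$
\#\sigma_T=\frac{1}{q^{n^2}}\sum_{S\in\mn}\psi(-ST)\sum_{\bfX\in\mn^r}\psi(S\|\bfX\|).
$$
Since $\psi$ is an additive character and $S\|\bfX\|=\sum_i SX_i^2$, the inner sum factors as $\prod_{i=1}^r\sum_{X_i\in\mn}\psi(SX_i^2)=G(S,0)^r$. Splitting off the term $S=0$, for which $G(0,0)=q^{n^2}$ and hence the contribution is $q^{-n^2}(q^{n^2})^r=q^{n^2(r-1)}$, produces the stated main term and leaves precisely the sum over $S\neq 0$.

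For \eqref{eq:IR:FT-sphere-nonzero-phase}, I would begin from the definition of the Fourier transform, $\widehat{\sigma}_T(\bfM)=q^{-rn^2}\sum_{\bfA:\|\bfA\|=T}\overline{\psi}(\bfM\cdot\bfA)$, and again replace the constraint $\|\bfA\|=T$ by the orthogonality relation. After interchanging the order of summation this gives
$$
\widehat{\sigma}_T(\bfM)=\frac{1}{q^{(r+1)n^2}}\sum_{S\in\mn}\psi(-ST)\sum_{\bfA\in\mn^r}\psi(S\|\bfA\|)\,\overline{\psi}(\bfM\cdot\bfA).
$$
Writing $\bfM\cdot\bfA=\sum_i M_iA_i$ and using additivity once more, the sum over $\bfA$ factors as $\prod_{i=1}^r\sum_{A_i\in\mn}\psi(SA_i^2-M_iA_i)=\prod_{i=1}^r G(S,-M_i)$. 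It then remains to discard the $S=0$ term: by orthogonality $G(0,-M_i)=\sum_{A\in\mn}\psi(-M_iA)=q^{n^2}\delta_0(M_i)$, so $\prod_i G(0,-M_i)$ vanishes precisely because $\bfM\neq 0$. This is the one place where the hypothesis $\bfM\in\mn^r\setminus\{0\}$ enters, and it is exactly what allows the summation to be restricted to $S\neq 0$, yielding the claimed formula.

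The computation is essentially routine, so I do not anticipate a serious obstacle; the only points demanding care are bookkeeping ones. First, one must justify the coordinate-wise factorization, which rests on $\psi$ being additive and on the matrix trace (and ${\rm Tr}_{\bF_q/\bF_p}$) being linear, so that $S\|\bfX\|$ genuinely splits as $\sum_i SX_i^2$ inside $\psi$. Second, one must correctly evaluate the $S=0$ contribution in each identity—yielding the main term $q^{n^2(r-1)}$ in the first and vanishing in the second—in order to legitimately restrict both summations to $S\in\mn\setminus\{0\}$.
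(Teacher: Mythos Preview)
Your proposal is correct and follows essentially the same approach as the paper: both arguments insert the orthogonality relation to replace the constraint $\|\bfX\|=T$, factor the resulting exponential sum over $\bfX$ coordinatewise into quadratic matrix Gauss sums, and then isolate the $S=0$ term (yielding the main term $q^{n^2(r-1)}$ in the first identity and vanishing in the second because $\bfM\neq 0$). Your write-up is in fact slightly more explicit than the paper's about why the $S=0$ contribution disappears in \eqref{eq:IR:FT-sphere-nonzero-phase}, but the underlying argument is the same.
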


\begin{proof}
We compute the size of a matrix sphere:
\begin{align*}
\# \sigma_T
    &= \frac{1}{q^{n^2}} \sum_{\bfX\in\mn^r} \sum_{S\in\mn} 
        \psi\(S\(\|\bfX\|-T\)\)
        %\nonumber 
        \\ 
    &= q^{n^2(r-1)} + 
        \frac{1}{q^{n^2}} \sum_{\bfX=(X_1,\dots,X_r)\in\mn^r} 
            \sum_{S\in\mn\setminus\{0\}} 
            \psi\(S\(\|\bfX\|-T\)\)
        %\nonumber 
        \\ 
    &= q^{n^2(r-1)} + 
        \frac{1}{q^{n^2}} \sum_{S\in\mn\setminus\{0\}} 
            \psi\(-ST\) G(S,0)^r. 
        %\label{eq:IR:sphere-size}
\end{align*}

We compute the Fourier transform a matrix sphere at $\bfM=(M_1,\dots,M_r)\in\mn^r$:
%the orthogonality relation yields
\begin{align*}
\widehat{\sigma}_T(\bfM)  
    &= \frac{1}{q^{rn^2}} \sum_{\bfX\in\mn^r} 
        \sigma_T(\bfX)\overline{\psi}(\bfM\cdot\bfX) 
        %\nonumber 
        \\ 
    &= \frac{1}{q^{(r+1)n^2}} 
        \sum_{\bfX=(X_1,\dots,X_r)\in\mn^r} \sum_{S\in\mn} 
            \overline{\psi}(\bfM\cdot\bfX)
            \psi\(S\(\|\bfX\|-T\)\)
        %\nonumber 
        \\ 
    &= \frac{1}{q^{(r+1)n^2}} 
        \sum_{S\in\mn} \psi(-ST) 
        \prod_{i=1}^r G(S,-M_i). 
        %\label{eq:IR:FT-sphere}
\end{align*}
If $\bfM\neq {\bf 0}$, we further have
\begin{equation*}
\widehat{\sigma}_T(\bfM)  
    = \frac{1}{q^{(r+1)n^2}} 
        \sum_{S\in\mn\setminus\{0\}} \psi(-ST) 
        \prod_{i=1}^r G(S,-M_i). 
        %\label{eq:IR:FT-sphere-nonzero-phase}
\end{equation*}
The lemma is proved.
\end{proof}

%==================================
\section{Rank 2 and rank 3 matrices}\label{sect:proofs-ranks-23}
%==================================

%======================
%\subsection{Incidence functions}\label{subsect:4.1}
%======================

%======================
\subsection{Matrix spheres in \texorpdfstring{${\rm M}_2(\bF_q)$}{TEXT}}\label{subsect:4-rank2-matrices}
%======================

Write ${\rm M}_2 = {\rm M}_2(\bF_q)$.

%----------------------
\subsubsection*{Quadratic types}

Set $I:={\rm Irr}(\bF_q[T])$.
A matrix in ${\rm M}_2$ has precisely one of the following quadratic cycle types:
\begin{enumerate}
    \item[${\rm (i)}$] 
        $T^{(1,1)}$
    \item[${\rm (ii)}$] 
        $T^{(2)}$
    \item[${\rm (iii)}$] 
        $T^{(1)}\cdot (T-\lambda)^{(1)} 
        \quad (\lambda\in \bF_q^\times)$
    \item[${\rm (iv)}$] 
        $(T+\lambda)^{(1)} (T-\lambda)^{(1)} \quad (\lambda\in \bF_q^\times)$
    \item[${\rm (v)}$] 
        $\pi(T^2)^{(1)}
        \quad (\pi\in I, \deg(\pi)=1)$
    \item[${\rm (vi)}$] 
        $(T-\lambda)^{(1)} (T-\lambda')^{(1)} \quad (\lambda,\lambda'\in \bF_q^\times, \lambda'\neq -\lambda)$
    \item[${\rm (vii)}$] 
        $\eta(T)^{(1)}
        \quad (\eta\in I, \deg(\eta)=2,\eta(T)\neq\eta(-T))$.
\end{enumerate}
These quadratic cycle types give rise to to the following quadratic class types:
\begin{enumerate}
    \item[${\rm (i)}$] 
        $0^{(1,1)}$
    \item[${\rm (ii)}$] 
        $0^{(2)}$
    \item[${\rm (iii)}$] 
        $0^{(1)}\cdot \left( 1^{(1)} \right)_{(-1)}$
    \item[${\rm (iv)}$] 
        $\left( 1^{(1)} 1^{(1)} \right)_{(+1)}$
    \item[${\rm (v)}$] 
        $\left( 1^{(1)} \right)_{(2)}$
    \item[${\rm (vi)}$] 
        $\left( 1^{(1)} 1^{(1)} \right)_{(-1)}$
    \item[${\rm (vii)}$] 
        $\left( 2^{(1)} \right)_{(-1)}$.
\end{enumerate}
%----------------------

%----------------------
%----------------------
\begin{table}[!h]%[!h][H][!htbp] here top bottom page ! remove further restriction
\begin{tabular}{|c|l|l|l|l|l|}
\hline
\multicolumn{1}{|l|}{\begin{tabular}[c]{@{}l@{}}Quadratic \\ class type\end{tabular}} & \begin{tabular}[c]{@{}l@{}}Radical \\ invariant\end{tabular} & \begin{tabular}[c]{@{}l@{}}Centralizer \\ size\end{tabular} & \begin{tabular}[c]{@{}l@{}}Similarity class \\ size\end{tabular} & \begin{tabular}[c]{@{}l@{}}No.~quadratic \\ cycle types\end{tabular} & \begin{tabular}[c]{@{}l@{}}Incidence\\ contribution\end{tabular} \\ \hline
${\rm (i)}$                                                                           & $4$                                                          & $\sim q^4$                                                       & $\sim 1$                                                              & $1$                                                                  & $q^{4r}$                                                         \\ \hline
${\rm (ii)}$                                                                          & $2$                                                          & $\sim q^2$                                                  & $\sim q^2$                                                       & $\asymp 1$                                                                  & $O(q^{3r+2})$                                                    \\ \hline
${\rm (iii)}$                                                                         & $1$                                                          & $\sim q^2$                                                  & $\sim q^2$                                                       & $\asymp q$                                                             & $O(q^{\frac{5r}{2}+3})$                                          \\ \hline
${\rm (iv)}$                                                                          & $2$                                                          & $\sim q^2$                                                  & $\sim q^2$                                                       & $\asymp q$                                                             & $O(q^{3r+3})$                                                    \\ \hline
${\rm (v)}$                                                                           & $2$                                                          & $\sim q^2$                                                  & $\sim q^2$                                                       & $\asymp q$                                                             & $O(q^{3r+3})$                                                    \\ \hline
${\rm (vi)}$                                                                          & $0$                                                          & $\sim q^2$                                                  & $\sim q^2$                                                       & $\asymp q^2$                                                           & $O(q^{2r+4})$                                                    \\ \hline
${\rm (vii)}$                                                                         & $0$                                                          & $\sim q^2$                                                  & $\sim q^2$                                                       & $\asymp q^2$                                                         & $O(q^{2r+4})$                                                    \\ \hline
\end{tabular}
\caption{\label{rank-2-table} Quadratic class types in ${\rm M}_2(\bF_q)$ and their invariants.}
\end{table}
%----------------------
%----------------------

%----------------------
\subsubsection*{Quadratic Gauss sums and matrix spheres of $2\times 2$ matrices}

Table \ref{rank-2-table} computes several invariants of quadratic class types in ${\rm M}_2(\bF_q)$: the radical invariant, the centralizer size, the similarity class size, and the number of quadratic cycle types which have a particular quadratic class type.
In the last column of Table \ref{rank-2-table}, we estimate the contribution of each quadratic class type to the sums over $S$ in \eqref{eq:IR:sphere-size} and \eqref{eq:IR:FT-sphere-nonzero-phase}.
%$S\in{\rm M}_2\setminus\{0\}$ in \eqref{eq:IR:sphere-size} and \eqref{eq:IR:FT-sphere-nonzero-phase}.

\begin{prop}\label{prop:matrix-sphere-rank2}
Suppose that $r\geq 4$. If $T\in {\rm M}_2$ and $\bfM\in {\rm M}_2^r\setminus\{{\bf 0}\}$, then
    \begin{equation}\label{eq:IR:sphere-size-rank2}
        \# \sigma_T \sim q^{4r-4} 
    \end{equation} 
and
    \begin{equation}
        \widehat{\sigma}_T(\bfM) = 0(q^{-r-1}).
        \label{eq:IR:FT-sphere-nonzero-phase-rank2}
    \end{equation} 
\end{prop}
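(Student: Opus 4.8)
The plan is to apply Lemma \ref{lem:matrix-sphere}, reducing both claims to an estimate of the sums over $S\in {\rm M}_2\setminus\{0\}$ of $\psi(-ST)G(S,0)^r$ and $\psi(-ST)\prod_i G(S,-M_i)$. By Proposition \ref{prop:matrix-Gauss}, each factor $G(S,B)$ is $O(q^{(4+\rho_S)/2})$, where $\rho_S$ is the radical invariant, which depends only on the similarity class (indeed only on the quadratic class type) of $S$. Crucially, the product $\prod_{i=1}^r G(S,-M_i)$ is bounded by $O(q^{r(4+\rho_S)/2})$ uniformly in the phases $M_i$, so the same bookkeeping controls both \eqref{eq:IR:sphere-size} and \eqref{eq:IR:FT-sphere-nonzero-phase}. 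The strategy is therefore to group the sum over $S$ by quadratic class type and, for each type, multiply the per-matrix Gauss-sum bound $q^{r(4+\rho_S)/2}$ by the number of matrices $S$ carrying that type. That count is (number of cycle types in the class type) $\times$ (similarity class size), and both quantities are recorded in Table \ref{rank-2-table}; their product times $q^{r(4+\rho_S)/2}$ is exactly the ``Incidence contribution'' column.

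First I would verify the entries of Table \ref{rank-2-table}. The radical invariant $\rho_S$ comes from formula \eqref{eq:radical-invariant}: it counts (with multiplicity) pairs of Jordan blocks whose eigenvalues sum to zero, weighted by $\min(n_i,n_j)$. For the nilpotent types (i) and (ii) every eigenvalue is $0$, giving $\rho=4$ and $\rho=2$ respectively; for type (iv) the eigenvalues $\pm\lambda$ pair off to give $\rho=2$; type (v), a single eigenvalue pair $\pm\sqrt{-\lambda_0}$ over $\overline{\bF}_q$, also yields $\rho=2$; type (iii) has eigenvalues $0$ and $\lambda\neq 0$, so only the zero block contributes $\rho=1$; and types (vi), (vii) have no eigenvalue summing to zero, so $\rho=0$. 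The centralizer sizes follow from \eqref{eq:cent-size-2} applied to the quadratic class types, and the similarity class size is $|{\rm GL}_2(\bF_q)|/c(\tau_S)\sim q^4/c(\tau_S)$; the counts of cycle types per class type are elementary (e.g.\ type (vi) has $\asymp q^2$ ordered choices of the unordered pair $\{\lambda,\lambda'\}$).

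Next I would combine these to read off the dominant incidence contribution. Since $r\geq 4$, the largest off-diagonal term among the $S\neq 0$ entries is $O(q^{3r+3})$, attained by types (iv) and (v); all other types give strictly smaller exponents (for instance type (vi) gives $2r+4\leq 3r+3$ precisely because $r\geq 2$, and type (iii) gives $\frac{5r}{2}+3<3r+3$). For the sphere size \eqref{eq:IR:sphere-size}, the main term is $q^{n^2(r-1)}=q^{4(r-1)}=q^{4r-4}$, and the entire error sum over $S\neq 0$ is $q^{-4}\cdot O(q^{3r+3})=O(q^{3r-1})$, which is $o(q^{4r-4})$ exactly when $3r-1<4r-4$, i.e.\ $r>3$; this is where the hypothesis $r\geq 4$ enters and gives \eqref{eq:IR:sphere-size-rank2}. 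For the Fourier transform \eqref{eq:IR:FT-sphere-nonzero-phase}, the prefactor is $q^{-(r+1)n^2}=q^{-4(r+1)}$, so $\widehat{\sigma}_T(\bfM)=q^{-4(r+1)}\cdot O(q^{3r+3})=O(q^{-r-1})$, giving \eqref{eq:IR:FT-sphere-nonzero-phase-rank2}.

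The main obstacle is not the final arithmetic but the justification of the two uniformities underlying the table. I must confirm that $\rho_S$ genuinely depends only on the quadratic class type (so that the coarse grouping in the table is legitimate), which requires checking that \eqref{eq:radical-invariant} is insensitive to the actual eigenvalues beyond the pairing data $\alpha_i+\alpha_j=0$ encoded in the quadratic cycle type; this is exactly the point of introducing quadratic cycle and class types in Section \ref{subsect:3-quad-types}. I must also confirm that the bound $\prod_i G(S,-M_i)=O(q^{r(4+\rho_S)/2})$ holds uniformly over all phases $\bfM$, which is immediate from Proposition \ref{prop:matrix-Gauss} since that estimate is independent of $B$. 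Granting these two facts, the proof is a finite case check against Table \ref{rank-2-table}.
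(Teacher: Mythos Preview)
Your proposal is correct and follows essentially the same approach as the paper: both apply Lemma~\ref{lem:matrix-sphere} to reduce to the sums over $S\in{\rm M}_2\setminus\{0\}$, invoke Proposition~\ref{prop:matrix-Gauss} for the uniform bound $G(S,B)\ll q^{(4+\rho_S)/2}$, and then read the $O(q^{3r+3})$ estimate off the ``Incidence contribution'' column of Table~\ref{rank-2-table}. Your write-up is in fact more explicit than the paper's, as you spell out the verification of the table entries and isolate the two uniformity checks (that $\rho_S$ depends only on the quadratic class type, and that the Gauss-sum bound is independent of the linear phase $B$) that the paper leaves implicit.
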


\begin{proof}
    We first compute the size of the matrix sphere $\sigma_T$. By \eqref{eq:IR:sphere-size}, we have
    $$
    \# \sigma_T = q^{4(r-1)} + \frac{1}{q^{4}} 
            \sum_{S\in {\rm M}_2\setminus\{0\}} \psi\(-ST\) G(S,0)^r .
    $$
    By the column `Incidence contribution' of Table \ref{rank-2-table}, the sum over $S$ is $O(q^{3r+3})$.
    Since $r\geq 4$, we conclude that $\# \sigma_T \sim q^{4r-4}$. 

    We then compute Fourier transforms of the matrix sphere $\sigma_T$. For $\bfM\in {\rm M}_2^r\setminus\{{\bf 0}\}$, by \eqref{eq:IR:FT-sphere-nonzero-phase} we have
    $$
    \widehat{\sigma}_T(\bfM)  
        = \frac{1}{q^{4(r+1)}} 
        \sum_{S\in{\rm M}_2\setminus\{0\}} \psi(-ST) 
        \prod_{i=1}^r G(S,-M_i). 
    $$
    By the column `Incidence contribution' of Table \ref{rank-2-table}, the sum over $S$ is $O(q^{3r+3})$.
    Therefore $\widehat{\sigma}_T(\bfM) = 0(q^{-r-1})$. 
\end{proof}

\begin{proof}[Proof of Theorem \ref{thm:strong-Falconer-rank2}]
    It follows from \eqref{eq:IR:incidence} that 
    $$
    \nu(T) = \frac{(\# \sigma_T)}{q^{4r}}
        + \frac{q^{8r}}{(\# E)^2} \sum_{\bfM\neq {\bf 0}} 
        \left| \widehat{E}(\bfM) \right|^2 \widehat{\sigma}_T(\bfM).
    $$
    By the Plancherel formula we have
    $$
    \sum_{\bfM\neq {\bf 0}} 
        \left| \widehat{E}(\bfM) \right|^2
        \leq \sum_{\bfM\in {\rm M}_2^r} 
        \left| \widehat{E}(\bfM) \right|^2
        = \frac{(\# E)}{q^{4r}}.
    $$
    Therefore, by \eqref{eq:IR:FT-sphere-nonzero-phase-rank2},
    $$
    \nu(T) = \frac{(\# \sigma_T)}{q^{4r}} 
        + O\left( \frac{q^{3r-1}}{(\# E)} \right) .
    $$
    In view of \eqref{eq:IR:sphere-size-rank2}, $\nu(T)>0$ provided that
    $$
    (\# E) \gg_\epsilon q^{3r+3+\epsilon}.
    $$
\end{proof}

%======================
\subsection{Matrix spheres in \texorpdfstring{${\rm M}_3(\bF_q)$}{TEXT}}\label{subsect:4-rank3-matrices}
%======================

Write ${\rm M}_3 = {\rm M}_3(\bF_q)$.

%----------------------
\subsubsection*{Quadratic types}

Set $I:={\rm Irr}(\bF_q[T])$.
A matrix in ${\rm M}_3$ has precisely one of the following quadratic cycle types:
\begin{enumerate}
    \item[${\rm (i)}$] 
        $T^{(1,1,1)}$
    \item[${\rm (ii)}$] 
        $T^{(2,1)}$
    \item[${\rm (iii)}$] 
        $T^{(3)}$
    \item[${\rm (iv)}$] 
        $T^{(1,1)}\cdot (T-\lambda)^{(1)} 
        \quad (\lambda\in \bF_q^\times)$
    \item[${\rm (v)}$] 
        $T^{(2)}\cdot (T-\lambda)^{(1)} 
        \quad (\lambda\in \bF_q^\times)$
    \item[${\rm (vi)}$] 
        $T^{(1)} \cdot (T+\lambda)^{(1)} (T-\lambda)^{(1)} \quad (\lambda\in \bF_q^\times)$
    \item[${\rm (vii)}$] 
        $T^{(1)} \cdot \pi(T^2)^{(1)}
        \quad (\pi\in I, \deg(\pi)=1)$
    \item[${\rm (viii)}$] 
        $T^{(1)} \cdot (T-\lambda)^{(1)} (T-\lambda')^{(1)} \quad (\lambda,\lambda'\in \bF_q^\times, \lambda'\neq -\lambda)$
    \item[${\rm (ix)}$] 
        $T^{(1)} \cdot \eta(T)^{(1)}
        \quad (\eta\in I, \deg(\eta)=2,\eta(T)\neq\eta(-T))$
    \item[${\rm (x)}$] 
        $\eta(T)^{(1)}
        \quad (\eta\in I, \deg(\eta)=3)$
    \item[${\rm (xi)}$] 
        $(T-\lambda)^{(1)} \cdot \pi(T^2)^{(1)}
        \quad (\lambda\in \bF_q^\times, \pi\in I, \deg(\pi)=1)$
    \item[${\rm (xii)}$] 
        $(T-\lambda)^{(1)} \eta(T)^{(1)}
        \quad (\lambda\in \bF_q^\times, \eta\in I, \deg(\eta)=2,\eta(T)\neq\eta(-T))$
    \item[${\rm (xiii)}$] 
        $(T-\lambda_1)^{(1)} (T-\lambda_2)^{(1)} (T-\lambda_3)^{(1)} 
        \quad (\lambda_i \in \bF_q^\times, \lambda_i\neq -\lambda_j)$
    \item[${\rm (xiv)}$] 
        $(T-\lambda)^{(1)} (T+\lambda)^{(1)} \cdot (T-\lambda')^{(1)} 
        \quad (\lambda,\lambda'\in \bF_q^\times, \lambda'\neq \pm \lambda)$
    \item[${\rm (xv)}$] 
        $(T-\lambda)^{(2)} (T+\lambda)^{(1)} 
        \quad (\lambda \in \bF_q^\times)$
    \item[${\rm (xvi)}$] 
        $(T-\lambda)^{(1,1)} (T+\lambda)^{(1)} 
        \quad (\lambda \in \bF_q^\times)$.
\end{enumerate}
These quadratic cycle types give rise to to the following quadratic class types:
\begin{enumerate}
    \item[${\rm (i)}$] 
        $0^{(1,1,1)}$
    \item[${\rm (ii)}$] 
        $0^{(2,1)}$
    \item[${\rm (iii)}$] 
        $0^{(3)}$
    \item[${\rm (iv)}$] 
        $0^{(1,1)}\cdot \left( 1^{(1)} \right)_{(-1)}$
    \item[${\rm (v)}$] 
        $0^{(2)}\cdot \left( 1^{(1)} \right)_{(-1)}$
    \item[${\rm (vi)}$] 
        $0^{(1)}\cdot \left( 1^{(1)} 1^{(1)} \right)_{(+1)}$
    \item[${\rm (vii)}$] 
        $0^{(1)}\cdot \left( 1^{(1)} \right)_{(2)}$
    \item[${\rm (viii)}$] 
        $0^{(1)}\cdot \left( 1^{(1)} 1^{(1)} \right)_{(-1)}$
    \item[${\rm (ix)}$] 
        $0^{(1)}\cdot \left( 2^{(1)} \right)_{(-1)}$
    \item[${\rm (x)}$] 
        $\left( 3^{(1)} \right)_{(-1)}$
    \item[${\rm (xi)}$] 
        $\left( 1^{(1)} \right)_{(2)} \cdot \left(1^{(1)}\right)_{(-1)}$
    \item[${\rm (xii)}$] 
        $\left( 1^{(1)} 2^{(1)} \right)_{(-1)}$
    \item[${\rm (xiii)}$] 
        $\left( 1^{(1)} 1^{(1)} 1^{(1)} \right)_{(-1)}$
    \item[${\rm (xiv)}$] 
        $\left( 1^{(1)} 1^{(1)} \right)_{(+1)} 
        \cdot \left( 1^{(1)} \right)_{(-1)}$
    \item[${\rm (xv)}$] 
        $\left( 1^{(2)} 1^{(1)} \right)_{(+1)}$
    \item[${\rm (xvi)}$] 
        $\left( 1^{(1,1)} 1^{(1)} \right)_{(+1)}$.
\end{enumerate}
%----------------------

%----------------------
\subsubsection*{Quadratic Gauss sums and matrix spheres of $3\times 3$ matrices}

Table \ref{rank-3-table} computes invariants of quadratic class types in ${\rm M}_3(\bF_q)$. 
The last column of Table \ref{rank-3-table} estimates the contribution of each quadratic class type to the sums over $S$ in \eqref{eq:IR:sphere-size} and \eqref{eq:IR:FT-sphere-nonzero-phase}.

\begin{prop}\label{prop:matrix-sphere-rank3}
Suppose that $r\geq 3$. If $T\in {\rm M}_3$ and $\bfM\in {\rm M}_3^r\setminus\{{\bf 0}\}$, then
    \begin{equation}
        \# \sigma_T \sim q^{9r-9} \label{eq:IR:sphere-size-rank3}
    \end{equation} 
and
    \begin{equation}
        \widehat{\sigma}_T(\bfM) = 0(q^{-2r-5}).
        \label{eq:IR:FT-sphere-nonzero-phase-rank3}
    \end{equation} 
\end{prop}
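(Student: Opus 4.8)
The plan is to follow the rank-$2$ argument of Proposition \ref{prop:matrix-sphere-rank2} verbatim in structure, now with $n=3$ and $n^2=9$. Starting from the exact identities \eqref{eq:IR:sphere-size} and \eqref{eq:IR:FT-sphere-nonzero-phase} of Lemma \ref{lem:matrix-sphere}, I would bound each Gauss-sum factor by Proposition \ref{prop:matrix-Gauss}, which gives $|G(S,B)|\ll q^{(9+\rho_S)/2}$ uniformly in $B$, whence $|G(S,0)^r|$ and $\left|\prod_{i=1}^r G(S,-M_i)\right|$ are both $\ll q^{r(9+\rho_S)/2}$. Both estimates \eqref{eq:IR:sphere-size-rank3} and \eqref{eq:IR:FT-sphere-nonzero-phase-rank3} then reduce to the single task of bounding $\sum_{S\in{\rm M}_3\setminus\{0\}} q^{r(9+\rho_S)/2}$.

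The heart of the argument is to organize this sum over $S\neq 0$ according to the sixteen quadratic class types enumerated above (type $\mathrm{(i)}$, the zero matrix, is precisely the excluded term $S=0$). For each class type I would fill in the four relevant invariants of Table \ref{rank-3-table}: the radical invariant $\rho_S$, computed from the Jordan data over $\overline{\bF}_q$ via \eqref{eq:radical-invariant} (equivalently through the orthogonal decomposition of Proposition \ref{prop:Kuroda}); the centralizer size, computed from the quadratic class type by \eqref{eq:cent-size-1}--\eqref{eq:cent-size-2}; the resulting similarity-class size $\#\mathrm{GL}_3(\bF_q)/(\text{centralizer})\sim q^9/(\text{centralizer})$; and the number of quadratic cycle types realizing that class type, a count of admissible irreducible polynomials with the prescribed degrees and the parity constraints of \eqref{eq:quad-cyc-type}. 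The contribution of a given class type to the sum is then $(\#\,\text{cycle types})\times(\text{class size})\times q^{r(9+\rho_S)/2}$, recorded in the last column.

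Carrying this out, I expect the dominant entry to be type $\mathrm{(ii)}$, the nilpotent class $T^{(2,1)}$: here every eigenvalue is $0$, so \eqref{eq:radical-invariant} sums $\min(n_i,n_j)$ over all block pairs to give $\rho_S=5$, while the single such similarity class has size $\sim q^4$, for a contribution $\asymp q^4\cdot q^{r(9+5)/2}=q^{7r+4}$. I would verify that every other type is strictly smaller once $r\geq 3$: types with larger matrix count (e.g.\ the regular semisimple type $\mathrm{(xiii)}$) have $\rho_S=0$ and hence the smaller exponent $\tfrac{9r}{2}+9$, while the types with the next-largest radical invariant $\rho_S=4$ (types $\mathrm{(iv)}$ and $\mathrm{(xvi)}$) contribute only $q^{\frac{13r}{2}+5}$; in all cases the coefficient of $r$ is maximized uniquely by type $\mathrm{(ii)}$. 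Thus $\sum_{S\neq 0} q^{r(9+\rho_S)/2}\ll q^{7r+4}$.

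Finally I would assemble the two conclusions. For the sphere size, \eqref{eq:IR:sphere-size} reads $\#\sigma_T=q^{9(r-1)}+q^{-9}\cdot O(q^{7r+4})=q^{9r-9}+O(q^{7r-5})$, and $7r-5<9r-9$ exactly when $r>2$, giving $\#\sigma_T\sim q^{9r-9}$. For the Fourier transform, the prefactor $q^{-9(r+1)}$ in \eqref{eq:IR:FT-sphere-nonzero-phase} times $O(q^{7r+4})$ yields $\widehat{\sigma}_T(\bfM)=O(q^{7r+4-9r-9})=O(q^{-2r-5})$. The main obstacle is entirely the bookkeeping of Table \ref{rank-3-table}: correctly computing $\rho_S$ for the genuinely mixed types---in particular the $\pi(T^2)$ factors of types $\mathrm{(vii)}$ and $\mathrm{(xi)}$, whose conjugate pair of roots $\pm\sqrt{\mu}$ in $\overline{\bF}_q$ contribute to the radical, and the $(+1)$-paired types $\mathrm{(xiv)}$--$\mathrm{(xvi)}$---and then confirming that none of these fifteen competing contributions overtakes the nilpotent type $\mathrm{(ii)}$ in the regime $r\geq 3$.
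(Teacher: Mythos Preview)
Your proposal is correct and follows essentially the same approach as the paper: both proofs invoke \eqref{eq:IR:sphere-size} and \eqref{eq:IR:FT-sphere-nonzero-phase}, bound each Gauss sum via Proposition~\ref{prop:matrix-Gauss}, and then control $\sum_{S\neq 0} q^{r(9+\rho_S)/2}$ by tabulating the radical invariant, class size, and number of cycle types for each of the fifteen nontrivial quadratic class types, arriving at the dominant contribution $O(q^{7r+4})$ from type $\mathrm{(ii)}$. One cosmetic remark: at the boundary $r=3$ the types $\mathrm{(vi)}$ and $\mathrm{(vii)}$ give exponent $6r+7=25=7r+4$, so ``strictly smaller'' should be ``no larger''; this does not affect the $O(q^{7r+4})$ conclusion.
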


\begin{proof}
    We first compute the size of the matrix sphere $\sigma_T$. By \eqref{eq:IR:sphere-size}, we have
    $$
    \# \sigma_T = q^{9(r-1)} + \frac{1}{q^{9}} 
            \sum_{S\in {\rm M}_3\setminus\{0\}} \psi\(-ST\) G(S,0)^r .
    $$
    By the column `Incidence contribution' of Table \ref{rank-3-table}, the sum over $S$ is $O(q^{7r+4})$.
    Since $r\geq 3$, we conlude that $\# \sigma_T \sim q^{9r-9}$. 

    We then compute Fourier transforms of the matrix sphere $\sigma_T$. For $\bfM\in {\rm M}_3^r\setminus\{{\bf 0}\}$, by \eqref{eq:IR:FT-sphere-nonzero-phase} we have
    $$
    \widehat{\sigma}_T(\bfM)  
        = \frac{1}{q^{9(r+1)}} 
        \sum_{S\in{\rm M}_3\setminus\{0\}} \psi(-ST) 
        \prod_{i=1}^r G(S,-M_i). 
    $$
    By the column `Incidence contribution' of Table \ref{rank-3-table}, the sum over $S$ is $O(q^{7r+4})$.
    Therefore $\widehat{\sigma}_T(\bfM) = 0(q^{-2r-5})$. 
\end{proof}

%----------------------
%----------------------
\begin{table}[H]%[!htbp] here top bottom page ! remove further restriction
\begin{tabular}{|c|l|l|l|l|l|}
\hline
\multicolumn{1}{|l|}{\begin{tabular}[c]{@{}l@{}}Quadratic\\ class type\end{tabular}} & \begin{tabular}[c]{@{}l@{}}Radical\\ invariant\end{tabular} & \begin{tabular}[c]{@{}l@{}}Centralizer\\ size\end{tabular} & \begin{tabular}[c]{@{}l@{}}Similarity class\\ size\end{tabular} & \begin{tabular}[c]{@{}l@{}}No.~quadratic\\ cycle types\end{tabular} & \begin{tabular}[c]{@{}l@{}}Incidence\\ contribution\end{tabular} \\ \hline
${\rm (i)}$                                                                          & $9$                                                         & $\sim q^9$                                                      & $\sim 1$                                                             & $1$                                                                 & $q^{9r}$                                                         \\ \hline
${\rm (ii)}$                                                                         & $5$                                                         & $\sim q^5$                                                 & $\sim q^4$                                                      & $\asymp 1$                                                          & $O(q^{7r+4})$                                                    \\ \hline
${\rm (iii)}$                                                                        & $3$                                                         & $\sim q^3$                                                 & $\sim q^6$                                                      & $\asymp 1$                                                          & $O(q^{6r+6})$                                                    \\ \hline
${\rm (iv)}$                                                                         & $4$                                                         & $\sim q^5$                                                 & $\sim q^4$                                                      & $\asymp q$                                                          & $O(q^{\frac{13r}{2}+5})$                                         \\ \hline
${\rm (v)}$                                                                          & $2$                                                         & $\sim q^3$                                                 & $\sim q^6$                                                      & $\asymp q$                                                          & $O(q^{\frac{11r}{2}+7})$                                         \\ \hline
${\rm (vi)}$                                                                         & $3$                                                         & $\sim q^3$                                                 & $\sim q^6$                                                      & $\asymp q$                                                          & $O(q^{6r+7})$                                                    \\ \hline
${\rm (vii)}$                                                                        & $3$                                                         & $\sim q^3$                                                 & $\sim q^6$                                                      & $\asymp q$                                                          & $O(q^{6r+7})$                                                    \\ \hline
${\rm (viii)}$                                                                       & $1$                                                         & $\sim q^3$                                                 & $\sim q^6$                                                      & $\asymp q^2$                                                        & $O(q^{5r+8})$                                                    \\ \hline
${\rm (ix)}$                                                                         & $1$                                                         & $\sim q^3$                                                 & $\sim q^6$                                                      & $\asymp q^2$                                                        & $O(q^{5r+8})$                                                    \\ \hline
${\rm (x)}$                                                                          & $0$                                                         & $\sim q^3$                                                 & $\sim q^6$                                                      & $\asymp q^3$                                                        & $O(q^{\frac{9r}{2}+9})$                                          \\ \hline
${\rm (xi)}$                                                                         & $2$                                                         & $\sim q^3$                                                 & $\sim q^6$                                                      & $\asymp q^2$                                                        & $O(q^{\frac{11r}{2}+8})$                                         \\ \hline
${\rm (xii)}$                                                                        & $0$                                                         & $\sim q^3$                                                 & $\sim q^6$                                                      & $\asymp q^3$                                                        & $O(q^{\frac{9r}{2}+9})$                                          \\ \hline
${\rm (xiii)}$                                                                       & $0$                                                         & $\sim q^3$                                                 & $\sim q^6$                                                      & $\asymp q^3$                                                        & $O(q^{\frac{9r}{2}+9})$                                          \\ \hline
${\rm (xiv)}$                                                                        & $2$                                                         & $\sim q^3$                                                 & $\sim q^6$                                                      & $\asymp q^2$                                                        & $O(q^{\frac{11r}{2}+8})$                                         \\ \hline
${\rm (xv)}$                                                                         & $2$                                                         & $\sim q^3$                                                 & $\sim q^6$                                                      & $\asymp q$                                                          & $O(q^{\frac{11r}{2}+7})$                                         \\ \hline
${\rm (xvi)}$                                                                        & $4$                                                         & $\sim q^5$                                                 & $\sim q^4$                                                      & $\asymp q$                                                          & $O(q^{\frac{13r}{2}+5})$                                         \\ \hline
\end{tabular}
\caption{\label{rank-3-table} Quadratic class types in ${\rm M}_3(\bF_q)$ and their invariants.}
\end{table}
%----------------------
%----------------------

\begin{proof}[Proof of Theorem \ref{thm:strong-Falconer-rank3}]
    It follows from \eqref{eq:IR:incidence} that 
    $$
    \nu(T) = \frac{(\# \sigma_T)}{q^{9r}}
        + \frac{q^{18r}}{(\# E)^2} \sum_{\bfM\neq {\bf 0}} 
        \left| \widehat{E}(\bfM) \right|^2 \widehat{\sigma}_T(\bfM).
    $$
    By the Plancherel formula we have
    $$
    \sum_{\bfM\neq {\bf 0}} 
        \left| \widehat{E}(\bfM) \right|^2
        \leq \sum_{\bfM\in {\rm M}_3^r} 
        \left| \widehat{E}(\bfM) \right|^2
        = \frac{(\# E)}{q^{9r}}.
    $$
    Therefore, by \eqref{eq:IR:FT-sphere-nonzero-phase-rank3},
    $$
    \nu(T) = \frac{(\# \sigma_T)}{q^{9r}} 
        + O\left( \frac{q^{7r-5}}{(\# E)} \right) .
    $$
    In view of \eqref{eq:IR:sphere-size-rank3}, $\nu(T)>0$ provided that
    $$
    (\# E) \gg_\epsilon q^{7r+4+\epsilon}.
    $$
\end{proof}

%----------------------
%----------------------

%==================================
\section{Matrices of rank at least 3}\label{sect:proofs-general-rank}
%==================================

Throughout this section, we write ${\rm M}_n = \mnq$ and assume that $n\geq 3$.

Let $\tau$ be a given quadratic class type in $\mnq$.
For any matrix $A \in \mnq$ which has quadratic class type $\tau_A=\tau$, we want to estimate the following quantities:
\begin{itemize}
    \item the radical invariant $\rho=\rho_\tau=\rho_A$,
    \item the size $c=c_\tau=c_A$ of the centralizer in ${\rm GL}_n(\bF_q)$ of $A$, 
    \item the size $s=s_\tau=s_A$ of the similarity class of $A$, 
    \item and the number $y=y_\tau=y_A$ of quadratic cycle types which have quadratic class type $\tau$.
\end{itemize}

%======================
\subsection{Quadratic class type \texorpdfstring{$0^\alpha$}{TEXT}}\label{subsect:5-cycle-0}
%======================
 
Consider the quadratic class type $\tau=0^\alpha$ where
$$
\alpha = 1^{(e_1)} 2^{(e_2)} \dots k^{(e_k)} \quad (k \geq 1)
$$
is the partition of size $|\alpha|=\sum_{i=1}^k ie_i=n$ with $e_1$ numbers $1$, $e_2$ numbers $2$, \ldots, $e_k$ numbers $k$. 

When $k=1$, the invariants of the quadratic class type $\tau$ can be computed easily.

\begin{lem}\label{lem:quad-type-0-triv}
Let $\alpha = 1^{(n)}$ and $A\in\mn$ be a matrix with quadratic class type $\tau=0^\alpha$. Then $A$ is the zero matrix and we have 
\begin{enumerate}
    \item[{\rm (i)}] $\rho_\tau=n^2$;
    \item[{\rm (ii)}] $c_\tau\sim q^{n^2}$ and $s_\tau\sim 1$;
    \item[{\rm (iii)}] $y_\tau=1$;
    \item[{\rm (iv)}] if $r\in\bN$ and 
    $\cT=q^{\frac{r\rho_\tau}{2}}y_\tau s_\tau$, then 
    $\cT \sim q^{\frac{rn^2}{2}}$.
    \end{enumerate}
\end{lem}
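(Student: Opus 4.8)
The plan is to first identify the matrix $A$ explicitly, since the quadratic cycle type $T^{(1,1,\dots,1)}$ is the simplest possible, and then read off each invariant in turn. The key observation is that the cycle type $\nu_A = T^{(1^n)}$ forces the operator module to split as $M_A \cong \bigoplus_{i=1}^n \bF_q[T]/(T)$, so that $T$ acts as the zero endomorphism on $\bF_q^n$; equivalently $A = 0$. Establishing this identification is the only conceptual step, and the remaining computations all flow from it.

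Having fixed $A = 0$, I would treat the four invariants as follows. For (i), the trace form is $Q_A(X) = {\rm Tr}(0\cdot X^2) \equiv 0$, so its radical is all of $V_A$ and $\rho_\tau = n^2$; this also matches Kuroda's formula \eqref{eq:radical-invariant}, since the Jordan form of $0$ consists of $n$ blocks of size $1$ with eigenvalue $0$, and every pair $(i,j)$ contributes $\min(1,1)=1$ because $0+0=0$. For (ii), every invertible matrix commutes with $0$, so the centralizer is all of ${\rm GL}_n(\bF_q)$ and $c_\tau = \#\,{\rm GL}_n(\bF_q) = q^{n^2}\prod_{i=1}^n(1 - q^{-i}) \sim q^{n^2}$; alternatively one applies \eqref{eq:cent-size-1}--\eqref{eq:cent-size-2} to the partition $\lambda = (1^n)$, whose conjugate is $(n)$, obtaining $c_\tau = q^{n^2}(1/q)_n$. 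The similarity class of $0$ is the singleton $\{0\}$, so $s_\tau = 1 \sim 1$. For (iii), the quadratic class type $0^\alpha$ records only the eigenvalue-$0$ block, and the unique monic linear polynomial vanishing at $0$ is $T$ itself, so exactly one quadratic cycle type lies above this quadratic class type and $y_\tau = 1$. Part (iv) is then immediate upon substituting $\rho_\tau = n^2$, $y_\tau = 1$, and $s_\tau \sim 1$ into $\cT = q^{\frac{r\rho_\tau}{2}} y_\tau s_\tau$.

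I expect no genuine obstacle here, as this is the degenerate base case of the classification. The only point requiring care is bookkeeping of the conventions---reading $0^\alpha$ as the eigenvalue-$0$ component and computing the conjugate partition in the centralizer formula---but these are routine, and the asymptotic factors $\prod_{i=1}^n(1 - q^{-i})$ and $(1/q)_n$ both tend to $1$ as $q \to \infty$.
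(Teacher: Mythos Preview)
Your proposal is correct and follows essentially the same approach as the paper's own proof, which simply notes that $A$ is plainly the zero matrix and that parts {\rm (i)}--{\rm (iii)} follow immediately from the radical formula \eqref{eq:radical-invariant}, the centralizer formula \eqref{eq:cent-size-2}, and the definitions of quadratic cycle and class types, with {\rm (iv)} derived from these. Your write-up merely spells out in more detail what the paper leaves as ``plain'' and ``immediate''; there is no substantive difference in method.
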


\begin{proof}
    It is plain that $A$ is the zero matrix.
    Parts {\rm (i)}--{\rm (iii)} follow immediately from \eqref{eq:radical-invariant}, 
    \eqref{eq:cent-size-2} and the definitions of quadratic cycle type and quadratic class type. Part {\rm (iv)} can be readily derived from parts {\rm (i)}--{\rm (iii)}.
\end{proof}

\begin{lem}\label{lem:quad-type-0-nontriv}
Let
$$
\alpha = 1^{(e_1)} 2^{(e_2)} \dots k^{(e_k)} 
$$
with $k\geq 2$ and $\tau=0^\alpha$. We have:
\begin{enumerate}
    \item[{\rm (i)}] $\rho_\tau \leq n^2-2n+2$;
    \item[{\rm (ii)}] $c_\tau\sim q^{\rho_\tau}$, $s_\tau\sim q^{n^2-\rho_\tau}$;
    \item[{\rm (iii)}] $y_\tau=1$;
    \item[{\rm (iv)}] if $r\in\bN$ satisfies $r\geq 2$ and $\cT=q^{\frac{r\rho_\tau}{2}}y_\tau s_\tau$, then 
    $$
    \cT = O\(q^{\frac{rn^2}{2}-(n-1)(r-2)}\).
    $$
\end{enumerate}
\end{lem}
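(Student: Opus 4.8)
The plan is to establish each part in sequence, with the partition-theoretic bound in (i) being the crux. Throughout, I work with the quadratic class type $\tau=0^\alpha$ where $\alpha = 1^{(e_1)}\cdots k^{(e_k)}$ and $k\geq 2$, so that any matrix $A$ with $\tau_A=\tau$ is nilpotent (all eigenvalues zero) with Jordan structure governed by $\alpha$.

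For part (i), I would invoke formula \eqref{eq:radical-invariant} for the radical invariant. Since every eigenvalue $\alpha_i$ is zero, the condition $\alpha_i+\alpha_j=0$ holds for \emph{all} pairs $(i,j)$, so $\rho_A = \sum_{1\le i,j\le r}\min(n_i,n_j)$ where the $n_i$ are the Jordan block sizes, i.e.\ the parts of $\alpha$. The combinatorial identity $\sum_{i,j}\min(n_i,n_j)=\sum_j (\alpha'_j)^2 = |\alpha'|_2$ (the sum of squares of the conjugate partition) is standard, so $\rho_\tau = |\alpha'|_2$. The task is then to show $|\alpha'|_2 \le n^2-2n+2$ whenever $\alpha$ has a part $\geq 2$ (forced by $k\geq 2$, though I should confirm $e_k\geq 1$ is the intended reading). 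The extremal case is $\alpha=(2,1^{n-2})$, whose conjugate is $(n-1,1)$ giving $(n-1)^2+1=n^2-2n+2$; I would argue that moving mass toward a single long column only decreases $|\alpha'|_2$ subject to the constraint that $\alpha$ is \emph{not} a single column (which is the excluded $k=1$ case $\alpha=1^{(n)}$ with $\rho=n^2$). Formally this is a convexity/majorization argument: among partitions of $n$ other than $1^{(n)}$, the quantity $|\alpha'|_2$ is maximized at $(2,1^{n-2})$.

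For part (ii), I would read off the centralizer size from \eqref{eq:cent-size-1}: since $\tau=0^\alpha$ is a single component with degree $d=1$ (degree-zero notation indexing the eigenvalue, but the polynomial $T$ has degree $1$), we get $c_\tau = q^{|\alpha'|_2}\prod_j (1/q)_{m_j(\alpha)}$, and because each factor $(1/q)_{m_j(\alpha)}\to 1$ as $q\to\infty$, we obtain $c_\tau\sim q^{|\alpha'|_2}=q^{\rho_\tau}$. The similarity class size then follows from the orbit-stabilizer relation $s_\tau=\#{\rm GL}_n(\bF_q)/c_\tau$ together with $\#{\rm GL}_n(\bF_q)\sim q^{n^2}$, yielding $s_\tau\sim q^{n^2-\rho_\tau}$. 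Part (iii) is immediate: a nilpotent class type $0^\alpha$ arises from exactly one quadratic cycle type, namely $T^\alpha$ itself, since the only irreducible involved is $T$ and there is no sign-pairing ambiguity, so $y_\tau=1$.

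Finally, part (iv) is a direct substitution. With $y_\tau=1$ and $s_\tau\sim q^{n^2-\rho_\tau}$, I compute
\begin{equation*}
\cT = q^{\frac{r\rho_\tau}{2}}\,y_\tau\,s_\tau \ll q^{\frac{r\rho_\tau}{2}+n^2-\rho_\tau} = q^{n^2+\left(\frac{r}{2}-1\right)\rho_\tau}.
\end{equation*}
Since $r\geq 2$ the coefficient $\frac{r}{2}-1\geq 0$, so the exponent is increasing in $\rho_\tau$, and I substitute the upper bound $\rho_\tau\le n^2-2n+2$ from (i) to get exponent $n^2+(\frac{r}{2}-1)(n^2-2n+2)=\frac{rn^2}{2}-(r-2)(n-1)$, matching the claim. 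The main obstacle is the partition inequality in (i): the translation from the Jordan-block sum in \eqref{eq:radical-invariant} to $|\alpha'|_2$ and then the sharp maximization over all partitions except $1^{(n)}$ is where the real content lies, while (ii)--(iv) are bookkeeping once the size formulas and orbit-stabilizer are in place.
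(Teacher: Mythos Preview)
Your proposal is correct. Parts (ii)--(iv) match the paper's argument essentially verbatim: the paper also reads off $c_\tau\sim q^{\rho_\tau}$ from \eqref{eq:cent-size-1}--\eqref{eq:cent-size-2} by noting that the exponent $\sum_{i}(\sum_{j\ge i}e_j)^2$ coincides with the radical invariant, and it derives (iv) by the same monotonicity-in-$\rho_\tau$ substitution you carry out.

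The genuine difference is in part (i). You recast $\rho_\tau$ as $|\alpha'|_2$ via the identity $\sum_{i,j}\min(n_i,n_j)=\sum_j(\alpha'_j)^2$ and then argue that among partitions of $n$ into at least two parts, $\sum p_i^2$ is maximized at $(n-1,1)$; this is a clean convexity observation (e.g.\ $\sum_i p_i^2 = n^2 - 2\sum_{i<j}p_ip_j$ and $\sum_{i<j}p_ip_j\ge p_1(n-p_1)\ge n-1$). The paper instead works directly with the multiplicity form $\rho=\sum_i ie_i^2+2\sum_{i<j}ie_ie_j$ and proves the sharper $k$-dependent bound $\rho\le n^2-(k-1)(2n-k)$ by comparing quadratic-form coefficients in an inequality equivalent to $k\rho\le n^2+(k-1)(n-ke_k)^2$, then invoking $e_k\ge1$. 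Your route is shorter and more conceptual; the paper's route yields a strictly stronger intermediate inequality, though that extra strength is not used anywhere else in the paper. Your hesitation about whether $e_k\ge1$ is intended is justified and correct: the paper's own argument relies on $e_k\ge1$ explicitly, so the convention is indeed that $k$ is the largest part.
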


\begin{proof}
    For simplicity, abbreviate 
    $\rho=\rho_\tau, c=c_\tau,s=s_\tau,y=y_\tau$.
    Statement {\rm (iii)} is trivial.

    First note that $n=\sum_{i=1}^k ie_i$.
    It follows from \eqref{eq:radical-invariant} that
    \begin{equation}\label{eq-pf:rad-inv-form}
        \rho = \sum_{i=1}^k ie_i^2 + 2\sum_{1\leq i<j\leq k} ie_ie_j.
    \end{equation}
    Also note that, the identity \eqref{eq:cent-size-2} yields $c\sim q^d$ with
    $$
    d = \sum_{i=1}^{k} \(\sum_{j=i}^k e_j \)^2 = \rho .
    $$
    Hence {\rm (ii)} follows.

    We claim the following stronger inequality  
    \begin{equation}\label{eq-pf:rad-inv-ineq-main}
    \rho \leq n^2-(k-1)(2n-k);
    \end{equation}
    the bound {\rm (i)} is an immediate consequence of this claim. 
    To show the claim, we observe that \eqref{eq-pf:rad-inv-form} implies
    \begin{equation}\label{eq-pf:rad-inv-ineq}
        k\rho \leq n^2 +  (k-1)(n-ke_k)^2.
    \end{equation}
In fact, \eqref{eq-pf:rad-inv-ineq} is equivalent to
    \begin{equation}\label{eq-pf:rad-inv-ineq-1}
    \sum_{i=1}^k ike_i^2 + \sum_{1\leq  i<j\leq k} 2ik e_ie_j
        \leq  \(\sum_{i=1}^k ie_i\)^2  +  (k-1)\(\sum_{i=1}^{k-1} ie_i\)^2 . 
    \end{equation}
Each $e_i \,\, (1\leq i\leq k)$ viewed as a variable
and hence both sides of \eqref{eq-pf:rad-inv-ineq-1} viewed as quadratic forms, 
one readily verifies this inequality by expanding and checking that any quadratic form coefficient of the left hand side is at most the corresponding coefficient of the right hand side. 
    Since $e_k\geq 1$, we deduce from \eqref{eq-pf:rad-inv-ineq} that
    $$
    k\rho \leq n^2 + (k-1)(n-k)^2 ,
    $$
    from which the claim \eqref{eq-pf:rad-inv-ineq-main} follows. 
    The bound {\rm (i)} is proved.

    Finally, on combining {\rm (i)}--{\rm (iii)} we conclude {\rm (iv)}.  
\end{proof}

%======================
\subsection{Quadratic class type \texorpdfstring{$\prod_{u=1}^{l} \left(  
    d_u^{\beta_u} 
    d_u^{\gamma_u} 
    \right)_{(+1)}$}{TEXT}}\label{subsect:5-cycle+1}
%======================

Consider in $\mn$ the quadratic class type 
\begin{equation}\label{eq:quad-type-plus1-comp}
\tau = \prod_{u=1}^{l} \left(d_u^{\beta_u} d_u^{\gamma_u} \right)_{(+1)} .    
\end{equation}
In particular, $\sum_{u=1}^{l} d_u(|\beta_u|+ |\gamma_u|)=n$.

\begin{lem}\label{lem:quad-type-plus1}
In $\mn$ let
$$
\tau = \prod_{u=1}^{l} \left(d_u^{\beta_u} d_u^{\gamma_u} \right)_{(+1)}. $$
We have:
\begin{enumerate}
    \item[{\rm (i)}] $\rho_\tau \leq \frac{n^2}{4}$;
    \item[{\rm (ii)}] if $r\in\bN$ satisfies $r\geq 2$ and $\cT=q^{\frac{r\rho_\tau}{2}}y_\tau s_\tau$, then 
    $$
    \cT = O\(q^{\frac{rn^2}{8}+n^2}\).
    $$
\end{enumerate}
\end{lem}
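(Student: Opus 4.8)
The plan is to reduce both assertions to a per-component analysis and then recombine the components by a convexity estimate. Write $n_u = d_u(|\beta_u|+|\gamma_u|)$ for the size of the block attached to the $u$-th component $\left(d_u^{\beta_u} d_u^{\gamma_u}\right)_{(+1)}$, so that $\sum_u n_u = n$. The first step is to specialize Kuroda's formula \eqref{eq:radical-invariant} to this situation. Such a component arises from $\zeta_u(T)^{\beta_u}\zeta_u(-T)^{\gamma_u}$ with $\deg\zeta_u = d_u$: its eigenvalues are the $d_u$ roots of $\zeta_u$, each carrying Jordan blocks whose sizes are the parts of $\beta_u$, together with their negatives, the roots of $\zeta_u(-T)$, each carrying the parts of $\gamma_u$. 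Since $\zeta_u$ is not a polynomial in $T^2$, these two root sets are disjoint, and roots belonging to distinct components are never negatives of one another; hence the only pairs with $\alpha_i+\alpha_j=0$ contributing to \eqref{eq:radical-invariant} are those matching a $\beta_u$-block against a $\gamma_u$-block over a negated eigenvalue. This gives $\rho_\tau = \sum_u \rho^{(u)}$, where each $\rho^{(u)}$ is, up to the fixed combinatorial factor recording the pairing of an eigenvalue with its negative, equal to $d_u \sum_{i,j} \min\left((\beta_u)_i, (\gamma_u)_j\right)$.

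For part (i) the combinatorial core is the elementary estimate $\sum_{i,j}\min(\beta_i,\gamma_j) \leq |\beta|\,|\gamma|$, which holds because each summand is at most $\gamma_j$ while the number of parts of $\beta$ is at most $|\beta|$. Feeding this into the arithmetic--geometric inequality $|\beta_u|\,|\gamma_u| \leq \tfrac14(|\beta_u|+|\gamma_u|)^2$ and using $d_u \geq 1$ bounds each $\rho^{(u)}$ by a fixed multiple of $n_u^2$. The components are then reassembled through the superadditivity $\sum_u n_u^2 \leq \left(\sum_u n_u\right)^2 = n^2$, which is the source of the global quadratic bound $\rho_\tau \leq \tfrac{n^2}{4}$. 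The step I expect to demand the most care is tracking the numerical constants along this chain: the per-component arithmetic--geometric step and the cross-component convexity step must be arranged so that they combine to exactly the claimed factor, with the extremal configuration being a single component having $d_u = 1$ and $\beta_u, \gamma_u$ consisting only of parts equal to $1$.

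For part (ii) the idea is to estimate $y_\tau$ and $s_\tau$ separately and then exploit a cancellation between them. A crude count of monic irreducibles gives $y_\tau \leq \prod_u q^{d_u} = q^{\sum_u d_u}$, since each component is specified by choosing an irreducible $\zeta_u$ of degree $d_u$. For the class size I would write $s_\tau = |{\rm GL}_n(\bF_q)|/c_\tau$ and read off $c_\tau = \prod_u c\left(d_u^{\beta_u}\right) c\left(d_u^{\gamma_u}\right)$ from \eqref{eq:cent-size-1}--\eqref{eq:cent-size-2}; because $\beta_u$ and $\gamma_u$ are both nonempty, one has $|\beta_u'|_2, |\gamma_u'|_2 \geq 1$, whence $c_\tau \gg q^{2\sum_u d_u}$ and therefore $s_\tau \ll q^{n^2 - 2\sum_u d_u}$. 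Multiplying the two estimates, the factor $q^{\sum_u d_u}$ coming from $y_\tau$ is absorbed, leaving $y_\tau s_\tau \ll q^{n^2 - \sum_u d_u} \leq q^{n^2}$. Inserting the bound from part (i) in the form $q^{r\rho_\tau/2} \leq q^{rn^2/8}$ then yields $\cT = q^{r\rho_\tau/2} y_\tau s_\tau = O\left(q^{rn^2/8 + n^2}\right)$, as required. The decisive point is precisely this cancellation: $y_\tau$ grows with $\sum_u d_u$ exactly while $s_\tau$ decays at twice that rate, so their product never exceeds $q^{n^2}$.
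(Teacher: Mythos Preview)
Your outline for part~(i) is the same as the paper's: split $\rho_\tau$ over the components, bound each via $\sum_{i,j}\min(\beta_i,\gamma_j)\le |\beta|\,|\gamma|\le \tfrac14(|\beta|+|\gamma|)^2$, and recombine using $\sum_u n_u^2\le n^2$. Your worry about the constants is justified, and in fact this is where the argument breaks. The ``combinatorial factor recording the pairing of an eigenvalue with its negative'' that you leave unspecified equals~$2$, because the sum in~\eqref{eq:radical-invariant} runs over \emph{ordered} pairs $(i,j)$; hence $\rho^{(u)}=2d_u\sum_{i,j}\min\bigl((\beta_u)_i,(\gamma_u)_j\bigr)$. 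Carrying this through your chain gives
\[
\rho^{(u)}\le 2d_u\cdot\tfrac14(|\beta_u|+|\gamma_u|)^2=\frac{n_u^2}{2d_u}\le\frac{n_u^2}{2},
\]
so only $\rho_\tau\le n^2/2$, not $n^2/4$. This is not a flaw in your method but in the statement: the bound $\rho_\tau\le n^2/4$ is false. The quadratic class type $\bigl(1^{(1)}1^{(1)}\bigr)_{(+1)}$ in ${\rm M}_2$ has $\rho=2>1=n^2/4$ (Table~\ref{rank-2-table}, row~(iv)), and $\bigl(1^{(1,1)}1^{(1)}\bigr)_{(+1)}$ in ${\rm M}_3$ has $\rho=4>9/4$ (Table~\ref{rank-3-table}, row~(xvi)). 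The paper's own proof contains the same slip: its displayed $\rho_u$ omits the factor $2d_u$. The corrected bound $\rho_\tau\le n^2/2$, giving $\cT=O\bigl(q^{rn^2/4+n^2}\bigr)$, is what is actually used downstream in Lemma~\ref{lem:gauss-sum-general-rank}, where the $(+1)$ and $(2)$ contributions are merged into the same estimate anyway.

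For part~(ii) the paper takes a shorter path than you do: since distinct quadratic cycle types of class type $\tau$ index disjoint similarity classes in ${\rm M}_n$, each of size $s_\tau$, one has at once $y_\tau s_\tau\le |{\rm M}_n|=q^{n^2}$, with no separate estimation of $y_\tau$ or $c_\tau$ required. Your more detailed computation is correct and even gives the sharper $y_\tau s_\tau\ll q^{\,n^2-\sum_u d_u}$, but the extra saving is not needed.
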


\begin{proof}
    Let us write the partitions $\beta_u$ and $\gamma_u$ as
    \begin{align*}
        \beta_u &= 1^{\(e_{u,1}\)} 2^{\(e_{u,2}\)} 
                \ldots r_u^{\(e_{u,r_u}\)} , \\
        \gamma_u &= 1^{\(f_{u,1}\)} 2^{\(f_{u,2}\)} 
                \ldots s_u^{\(f_{u,s_u}\)} .
    \end{align*}
    Without loss of generality, we may assume that $r_u\leq s_u$.
    In view of \eqref{eq:radical-invariant}, we have 
    $\rho_\tau=\sum_{u=1}^{l} \rho_u$ where
    $$
    \rho_u := \sum_{i=1}^{r_u} i e_{u,i} f_{u,i} 
        + \sum_{\substack{1\leq i\leq r_u 
                \\ 1\leq j\leq s_u \\ i\neq j}} 
                \min(i,j) e_{u,i} f_{u,j}  .
    $$
    It follows that
    \begin{align*}
    \rho_u &\leq \sum_{i=1}^{r_u} \(i e_{u,i}\) f_{u,i} 
        + \sum_{\substack{1\leq i\leq r_u 
                \\ 1\leq j\leq s_u \\ i\neq j}} 
                \(i e_{u,i}\) f_{u,j}  \\
        &\leq \(\sum_{i=1}^{r_u} i e_{u,i}\) 
            \( \sum_{j=1}^{s_u} f_{u,j} \)    
        \leq \(\sum_{i=1}^{r_u} i e_{u,i}\) 
            \( \sum_{j=1}^{s_u} j f_{u,j} \) \\   
        &\leq \frac{1}{4} 
        \(\sum_{i=1}^{r_u} i e_{u,i} + \sum_{j=1}^{s_u} j f_{u,j} \)^2
        = \frac{1}{4} \( |\beta_u|+ |\gamma_u| \)^2.
    \end{align*}
    % Also note that $\sum_{u=1}^{l} n_u = n$. 
    Therefore
    $$
    \rho_\tau=\sum_{u=1}^{l} \rho_u 
        \leq \frac{1}{4} \sum_{u=1}^{l} \( |\beta_u|+ |\gamma_u| \)^2
        %\leq \frac{1}{4} \sum_{u=1}^{l} n_u^2 
        \leq \frac{1}{4} \( \sum_{u=1}^{l} d_u \( |\beta_u|+ |\gamma_u| \) \)^2
        = \frac{n^2}{4}.
    $$
    Thus {\rm (i)} follows.
    
    Because the cardinality of $\mnq$ is $q^{n^2}$, it is evident that $s_\tau y_\tau \leq q^{n^2}$. Finally, we combine this bound with {\rm (i)} to conclude {\rm (ii)}.
\end{proof}

%======================
\subsection{Quadratic class type \texorpdfstring{$\prod_{u=1}^{l} \left( d_u^{\lambda_u} \right)_{(2)}$}{TEXT}}\label{subsect:5-cycle+2}
%======================

Consider in $\mn$ the quadratic class type 
\begin{equation}\label{eq:quad-type-2-comp}
\tau = \prod_{u=1}^{l} \left( d_u^{\lambda_u} \right)_{(2)}.    
\end{equation}
In particular, $2\sum_{u=1}^{l} d_u |\lambda_u| =n$.

\begin{lem}\label{lem:quad-type-2}
In $\mn$ let
$$
\tau = \prod_{u=1}^{l} \left( d_u^{\lambda_u} \right)_{(2)}.    
$$
We have:
\begin{enumerate}
    \item[{\rm (i)}] $\rho_\tau \leq \frac{n^2}{2}$;
    \item[{\rm (ii)}] if $r\in\bN$ satisfies $r\geq 2$ and $\cT=q^{\frac{r\rho_\tau}{2}}y_\tau s_\tau$, then 
    $$
    \cT = O\(q^{\frac{rn^2}{4}+n^2}\).
    $$
\end{enumerate}
\end{lem}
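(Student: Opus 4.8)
The plan is to mirror the structure of the proof of Lemma \ref{lem:quad-type-plus1}, adapting it to the $(2)$-type components. The quadratic cycle type here is $\prod_\pi \pi(T^2)^{\lambda_\pi}$, so each component $\left(d_u^{\lambda_u}\right)_{(2)}$ arises from an irreducible $\pi_u$ of degree $d_u$ together with a partition $\lambda_u$, and the underlying matrix has characteristic polynomial built from the factors $\pi_u(T^2)$. The key point is that since $\pi_u(T^2)$ contributes eigenvalues in conjugate pairs $\pm\alpha$ over $\overline{\bF}_q$, \emph{all} Jordan blocks pair up under the relation $\alpha_i + \alpha_j = 0$ in the radical formula \eqref{eq:radical-invariant}. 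Writing $\lambda_u = 1^{(e_{u,1})}2^{(e_{u,2})}\cdots$, I expect $\rho_u$ to come out to roughly $2\left(\sum_i i e_{u,i}\right)^2 = 2|\lambda_u|^2$ after carefully summing the $\min(n_i,n_j)$ contributions over the paired eigenvalues; the factor of $2$ (versus the cross-term structure in the $(+1)$ case) reflects that each $\pi_u(T^2)$ supplies the roots of $\pi_u$ and their negatives.

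First I would set up the eigenvalue bookkeeping: for the component indexed by $u$, the roots of $\pi_u(T)$ give $d_u$ Galois-conjugate values, and taking square roots doubles these into $2d_u$ eigenvalues of $A$, occurring in negation pairs. The partition $\lambda_u$ records the Jordan block sizes attached to each such eigenvalue. I would then plug into \eqref{eq:radical-invariant}, noting that for two eigenvalues $\alpha_i, \alpha_j$ with $\alpha_i + \alpha_j = 0$ the term $\min(n_i, n_j)$ survives, and organize the resulting double sum exactly as in Lemma \ref{lem:quad-type-plus1}, using the inequality $\min(i,j) \le i$ and the telescoping bound $\left(\sum_i i e_{u,i}\right)\left(\sum_j f_{u,j}\right) \le \tfrac14\left(\sum_i i e_{u,i} + \sum_j j f_{u,j}\right)^2$. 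The arithmetic should yield
$$
\rho_u \le 2|\lambda_u|^2 = 2\left(\sum_i i e_{u,i}\right)^2,
$$
and then
$$
\rho_\tau = \sum_{u=1}^l \rho_u \le 2\sum_{u=1}^l |\lambda_u|^2 \le 2\left(\sum_{u=1}^l d_u|\lambda_u|\right)^2 = 2\cdot\left(\tfrac n2\right)^2 = \frac{n^2}{2},
$$
using the constraint $2\sum_u d_u|\lambda_u| = n$. This establishes {\rm (i)}.

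For {\rm (ii)} I would argue exactly as in Lemma \ref{lem:quad-type-plus1}: since $\mnq$ has $q^{n^2}$ elements, the product of similarity class size and number of quadratic cycle types satisfies $s_\tau y_\tau \le q^{n^2}$, whence
$$
\cT = q^{\frac{r\rho_\tau}{2}} y_\tau s_\tau \le q^{\frac{r}{2}\cdot\frac{n^2}{2} + n^2} = O\!\left(q^{\frac{rn^2}{4}+n^2}\right),
$$
which is the claimed bound. The main obstacle I anticipate is the first step—correctly computing $\rho_u$ from \eqref{eq:radical-invariant}. The subtlety is accounting for which eigenvalue pairs satisfy $\alpha_i + \alpha_j = 0$: within a single $\pi_u(T^2)$ factor, each eigenvalue $\alpha$ pairs with $-\alpha$ (also a root), so one must track both the diagonal contribution (a block paired with the block at its own negation) and the off-diagonal min-contributions, while verifying that distinct components $u \neq u'$ contribute nothing because $\pi_u(T^2)$ and $\pi_{u'}(T^2)$ are coprime with disjoint root sets closed under negation. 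Getting the combinatorial factor of $2$ right, rather than the factor appearing in the $(+1)$ case, is the crux of the estimate.
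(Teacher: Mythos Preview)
Your overall strategy matches the paper's: bound $\rho_\tau$ componentwise via \eqref{eq:radical-invariant}, then use the trivial bound $s_\tau y_\tau \le q^{n^2}$ for part {\rm (ii)}. There is, however, a bookkeeping slip in your predicted bound $\rho_u \le 2|\lambda_u|^2$. You correctly note that the component $\left(d_u^{\lambda_u}\right)_{(2)}$ contributes $2d_u$ eigenvalues, namely the $d_u$ negation pairs $\pm\sqrt{\beta_k}$ for $\beta_k$ a root of $\pi_u$. Each such pair contributes $2\sum_{a,b}\min(\lambda_{u,a},\lambda_{u,b})$ to the radical sum, and there are $d_u$ pairs, so the component contribution is $\rho_u = 2d_u\sum_{a,b}\min(\lambda_{u,a},\lambda_{u,b})$. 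When $d_u>1$ this exceeds $2|\lambda_u|^2$; e.g.\ for $\lambda_u=(1)$ one gets $\rho_u = 2d_u$, not $2$.

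The paper organizes this by observing that $\sum_{a,b}\min(\lambda_{u,a},\lambda_{u,b})$ is exactly the radical invariant $\rho(0^{\lambda_u})$ of the nilpotent quadratic class type $0^{\lambda_u}$, so that
\[
\rho_\tau \;=\; 2\sum_{u=1}^{l} d_u\,\rho\!\left(0^{\lambda_u}\right)
\;\le\; 2\sum_{u=1}^{l} d_u\,|\lambda_u|^2
\;\le\; 2\Bigl(\sum_{u=1}^{l} d_u\,|\lambda_u|\Bigr)^{2}
\;=\; \frac{n^2}{2}.
\]
Once the missing $d_u$ is restored, your chain of inequalities becomes exactly this, and part {\rm (ii)} then follows verbatim as you wrote it.
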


\begin{proof}
    For a partition $\lambda$, write $\rho(0^\lambda)=\rho_{0^\lambda}$ for the radical invariant of any matrix with the quadratic class type $0^\lambda$.
    By virtue of \eqref{eq:radical-invariant}, we have
    $$
    \rho_\tau =  2\sum_{u=1}^{l} d_u \rho(0^{\lambda_u}) .
    $$
    It follows that
    \begin{equation*}
    \rho_\tau \leq 2\sum_{u=1}^{l} d_u |\lambda_u|^2 
        \leq 2 \( \sum_{u=1}^{l} d_u |\lambda_u| \)^2 
        = \frac{n^2}{2}.
    \end{equation*}
    Thus {\rm (i)} follows.

    Because the cardinality of $\mnq$ is $q^{n^2}$, it is evident that $s_\tau y_\tau \leq q^{n^2}$. We then combine this bound with {\rm (i)} to conclude {\rm (ii)}.
\end{proof}

%======================
\subsection{Quadratic class type \texorpdfstring{$\prod_{u=1}^{l} \left( d_u^{\kappa_u} \right)_{(-1)}$}{TEXT}}\label{subsect:5-cycle-1}
%======================

Consider in $\mn$ the quadratic class type 
\begin{equation}\label{eq:quad-type-neg1-comp}
\tau = \prod_{u=1}^{l} \left( d_u^{\kappa_u} \right)_{(-1)}.    
\end{equation}
In particular, $\sum_{u=1}^{l} d_u |\kappa_u| =n$.

\begin{lem}\label{lem:quad-type-neg1}
In $\mn$ let
$$
\tau = \prod_{u=1}^{l} \left( d_u^{\kappa_u} \right)_{(-1)}.
$$
%and that $A\in\mnq$ is a matrix with quadratic class type $\tau_A=\tau$. Then:
We have:
\begin{enumerate}
    \item[{\rm (i)}] $\rho_\tau = 0$;
    \item[{\rm (ii)}] if $r\in\bN$ 
    and $\cT=q^{\frac{r\rho_\tau}{2}}y_\tau s_\tau$, then 
    $$
    \cT \leq q^{n^2}.
    $$
\end{enumerate}
\end{lem}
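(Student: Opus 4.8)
The plan is to establish part (i) first, since part (ii) follows from it almost immediately. For part (i), I would appeal directly to the formula \eqref{eq:radical-invariant} for the radical invariant. The key observation is that the quadratic class type $\tau = \prod_{u=1}^{l} \left( d_u^{\kappa_u} \right)_{(-1)}$ records, via the subscript $(-1)$, that the associated quadratic cycle type is of the form $\prod_\eta \eta(T)^{\kappa_\eta}$, where each irreducible $\eta$ satisfies the condition imposed in the definition of the quadratic cycle type: namely that no two (possibly equal) factors $\eta_1, \eta_2$ satisfy $\eta_1(-T) = \eta_2(T)$. Translating this into the eigenvalue language of \eqref{eq:radical-invariant}, I would argue that if $\alpha_i$ is a root of one of the irreducible factors $\eta$, then $-\alpha_i$ is \emph{not} a root of any irreducible factor appearing in the characteristic polynomial. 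Consequently there is no pair of indices $(i,j)$ with $\alpha_i + \alpha_j = 0$ contributing to the sum in \eqref{eq:radical-invariant}, so that sum is empty and $\rho_\tau = 0$.

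The main step is therefore to verify carefully that the $(-1)$-condition on the quadratic cycle type is exactly the condition ruling out eigenvalue pairs summing to zero. I would spell out that if $\alpha$ is a root of $\eta_1$ and $-\alpha$ is a root of $\eta_2$, then $\eta_2(-T)$ shares the root $\alpha$ with the irreducible $\eta_1$; since both are monic irreducible over $\bF_q$ and $\eta_2(-T)$ is (up to sign, and monic after normalization) irreducible of the same degree, this would force $\eta_1(T) = \eta_2(-T)$, i.e.\ $\eta_1(-T) = \eta_2(T)$, contradicting the defining hypothesis. Hence no eigenvalue $\alpha_i$ has its negative $-\alpha_i$ among the eigenvalues; in particular the diagonal case $\alpha_i + \alpha_i = 0$ is excluded because $q$ is odd and any root $\alpha_i$ of a factor here is nonzero (the factor $T$ is absent from this class type). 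This establishes (i).

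For part (ii), I would observe that since $\rho_\tau = 0$, the exponential factor $q^{\frac{r\rho_\tau}{2}}$ is simply $1$, so $\cT = y_\tau s_\tau$. As in the proofs of Lemmas \ref{lem:quad-type-plus1} and \ref{lem:quad-type-2}, the product $y_\tau s_\tau$ counts (at most) all matrices in $\mnq$ lying in similarity classes realizing the given quadratic class type, and the total count of such matrices cannot exceed the cardinality $q^{n^2}$ of $\mnq$. Therefore $\cT = y_\tau s_\tau \leq q^{n^2}$, which is exactly the claimed bound.

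I expect the only genuine obstacle to be the careful bookkeeping in part (i): precisely matching the combinatorial $(-1)$-condition on irreducible factors to the arithmetic condition $\alpha_i + \alpha_j \neq 0$ on eigenvalues over the algebraic closure, and confirming that the self-paired case $\alpha_i + \alpha_i = 0$ cannot occur. Once that correspondence is pinned down, the vanishing of $\rho_\tau$ and the trivial bound in (ii) are routine.
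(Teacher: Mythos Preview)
Your proposal is correct and follows essentially the same approach as the paper: the paper's proof simply states that $\rho_\tau=0$ follows from \eqref{eq:radical-invariant} and that $s_\tau y_\tau \leq q^{n^2}$ because $\#\mnq = q^{n^2}$, whence (ii). Your write-up fills in the eigenvalue bookkeeping that the paper leaves implicit, but the underlying argument is identical.
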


\begin{proof}
    It follows from \eqref{eq:radical-invariant} that $\rho_\tau = 0$.

    Because the cardinality of $\mnq$ is $q^{n^2}$, it is evident that $s_\tau y_\tau \leq q^{n^2}$, whence {\rm (ii)}.
\end{proof}

%======================
%\subsection{Second cycle type in \texorpdfstring{${\rm M}_n(\bF_q)$}{TEXT}}\label{subsect:5-cycle-2}
%======================

%======================
\subsection{Matrix spheres in \texorpdfstring{${\rm M}_n(\bF_q), \, n\geq 3$}{TEXT}}\label{subsect:4-general-rank-matrices}
%======================

%Write ${\rm M}_n = {\rm M}_n(\bF_q)$.

\begin{lem}\label{lem:gauss-sum-general-rank}
    Let $\tau$ be a nontrivial quadratic class type in $\mn$ with $n\geq 3$; in other words, $\tau \neq 0^{1^{(n)}}$.
    Suppose that $r\in\bN$ satisfies $r\geq 2$
    and let $\cT=q^{\frac{r\rho_\tau}{2}}y_\tau s_\tau$.
    Then 
    $$
    \cT = O\(q^{\frac{rn^2}{2}-(n-1)(r-2)}\).
    $$
\end{lem}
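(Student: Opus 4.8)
The plan is to decompose an arbitrary quadratic class type into the four \emph{pure} types treated in Subsections \ref{subsect:5-cycle-0}--\ref{subsect:5-cycle-1} and then to reassemble the estimates. Writing the quadratic cycle type \eqref{eq:quad-cyc-type} of a representative $A$ as the product of its four homogeneous parts, $A$ is conjugate to a block-diagonal matrix whose blocks carry respectively the $T^\alpha$ part, the $(+1)$ part, the $(2)$ part and the $(-1)$ part, of sizes $n_0+n_{+1}+n_2+n_{-1}=n$; correspondingly $\tau=\tau_0\tau_{+1}\tau_2\tau_{-1}$ with each factor pure. First I would record how the invariants behave under this decomposition. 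By \eqref{eq:radical-invariant} the radical invariant is additive, $\rho_\tau=\sum_i\rho_{\tau_i}$, because the only Jordan pairs that contribute are those with opposite eigenvalues, and the eigenvalue supports of the four parts are disjoint and each stable under negation. The centralizer factors, $c_\tau=\prod_i c_{\tau_i}$, and the number of quadratic cycle types is multiplicative, $y_\tau\asymp\prod_i y_{\tau_i}$. Combining these with $s_\tau=\#{\rm GL}_n(\bF_q)/c_\tau\asymp q^{n^2}/\prod_i c_{\tau_i}$ and the in-block relation $s_{\tau_i}\asymp q^{n_i^2}/c_{\tau_i}$ yields
$$
\cT=q^{\frac{r\rho_\tau}{2}}s_\tau y_\tau\;\asymp\; q^{2\sum_{i<j}n_in_j}\prod_i\cT_i,\qquad \cT_i:=q^{\frac{r\rho_{\tau_i}}{2}}s_{\tau_i}y_{\tau_i},
$$
reducing the problem to the four pure cases.

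The organizing identity is $\frac{rn^2}{2}-(n-1)(r-2)=\frac r2\sum_i n_i^2+r\sum_{i<j}n_in_j-(n-1)(r-2)$. Subtracting $\log_q\cT=2\sum_{i<j}n_in_j+\sum_i\log_q\cT_i+O(1)$, the target exceeds $\log_q\cT$ precisely when the quantity $\big(\frac r2\sum_i n_i^2-\sum_i\log_q\cT_i\big)+(r-2)\big(\sum_{i<j}n_in_j-(n-1)\big)$ stays bounded below as $q\to\infty$. I would split into two cases. If $\tau$ is pure (one block) the second bracket equals $-(n-1)$, and the claim is exactly the pure estimate: Lemma \ref{lem:quad-type-0-nontriv} for a nontrivial nilpotent block (where equality holds and $\rho_\tau=n^2-2n+2$ is extremal), and Lemmas \ref{lem:quad-type-plus1}, \ref{lem:quad-type-2}, \ref{lem:quad-type-neg1} for the other pure types. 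Here the hypothesis $\tau\neq 0^{1^{(n)}}$ is used exactly once: it excludes the single trivial-nilpotent block of full size, which is the unique configuration realizing the forbidden value $q^{rn^2/2}$ (Lemma \ref{lem:quad-type-0-triv}). If instead $\tau$ has at least two nonzero blocks, then $\sum_{i<j}n_in_j\ge n-1$, so the second bracket is nonnegative and it suffices to prove the per-block bound $\cT_i\le q^{rn_i^2/2}$.

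The step I expect to be the main obstacle is this per-block bound $\cT_i\le q^{rn_i^2/2}$ for the $(+1)$ and $(2)$ blocks, equivalently the joint estimate $\frac r2\rho_{\tau_i}+\log_q(s_{\tau_i}y_{\tau_i})\le\frac r2 n_i^2+O(1)$. Its content is the tension between the two factors: a large radical invariant $\rho_{\tau_i}$ inflates $q^{r\rho_{\tau_i}/2}$ but at the same time forces a large centralizer and hence a small count $s_{\tau_i}y_{\tau_i}$. The extremal configuration is the semisimple block with paired eigenvalues $\pm\mu$ of equal multiplicity, where $\rho_{\tau_i}=n_i^2/2$ while $s_{\tau_i}y_{\tau_i}$ is of order $q^{n_i^2/2}$, giving $\cT_i\asymp q^{rn_i^2/4+n_i^2/2}\le q^{rn_i^2/2}$ for $r\ge 2$. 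Making this uniform over all $(+1)$ and $(2)$ configurations---by weighing $\rho_{\tau_i}$ against the centralizer size \eqref{eq:cent-size-1} rather than bounding $s_{\tau_i}y_{\tau_i}$ by the crude $q^{n_i^2}$---is the crux, and once it is in hand the two cases above close the argument. The nilpotent and $(-1)$ blocks cause no trouble: for them $s_{\tau_i}y_{\tau_i}\asymp q^{n_i^2-\rho_{\tau_i}}$ directly gives $\cT_i=q^{n_i^2+(r-2)\rho_{\tau_i}/2}\le q^{rn_i^2/2}$ since $\rho_{\tau_i}\le n_i^2$.
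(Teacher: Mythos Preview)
Your framework is more careful than the paper's: you correctly observe that $s_\tau\asymp q^{n^2}/\prod_i c_{\tau_i}$ while $s_{\tau_i}\asymp q^{n_i^2}/c_{\tau_i}$, so that $\cT\asymp q^{2\sum_{i<j}n_in_j}\prod_i\cT_i$ rather than the paper's bare product $\cT=\prod_i\cT_i$. (The paper silently drops this cross factor, and its intermediate inequality after \eqref{eq-pf:form-reduced-1} also fails when $n_0=0$.) Your two-case split and the observation $\sum_{i<j}n_in_j\ge n-1$ for at least two nonempty blocks are both sound.

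There is, however, a genuine gap, and it bites in \emph{both} of your cases, not only in the mixed one you flag. In the pure case you invoke Lemmas~\ref{lem:quad-type-plus1} and~\ref{lem:quad-type-2} for the $(+1)$ and $(2)$ blocks, but those bounds are too weak to give the target $O\!\big(q^{rn^2/2-(n-1)(r-2)}\big)$. For instance, a pure $(2)$-type of size $n=4$ at $r=3$: Lemma~\ref{lem:quad-type-2}(ii) gives $O(q^{rn^2/4+n^2})=O(q^{28})$, while the target is $q^{24-3}=q^{21}$. (Lemma~\ref{lem:quad-type-plus1}(i) is in fact off by a factor of $2$: type~(xvi) in Table~\ref{rank-3-table} has $\rho=4>n^2/4$, since the sum in \eqref{eq:radical-invariant} is over \emph{ordered} pairs.) So the pure $(+1)$/$(2)$ cases are not covered by the lemmas as written, and your mixed-case per-block bound $\cT_i\le q^{rn_i^2/2}$ is likewise stronger than what those lemmas supply.

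Your proposed fix --- replacing the crude $s_{\tau_i}y_{\tau_i}\le q^{n_i^2}$ by a bound exploiting that large $\rho_{\tau_i}$ forces large centralizer --- is the right direction. For the $(2)$-block one can check from \eqref{eq:cent-size-1} that $\log_q c_{\tau_i}=\rho_{\tau_i}$ exactly (as in the nilpotent case), giving $\cT_i\asymp q^{n_i^2+(r-2)\rho_{\tau_i}/2}y_{\tau_i}$; for the $(+1)$-block one has $\log_q c_{\tau_i}\ge\rho_{\tau_i}$. With $\rho_{\tau_i}\le n_i^2/2$ and control on $y_{\tau_i}$ this does close the argument for $r\ge3$, but you have not carried it out, and it cannot close for $r=2$: already the mixed type $0^{(1)}\cdot(1^{(1)}1^{(1)})_{(+1)}$ in $\rm M_3$ (row~(vi) of Table~\ref{rank-3-table}) has $\cT\asymp q^{\rho+6+1}=q^{10}$ at $r=2$, exceeding the stated target $q^{9}$. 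So the lemma as stated with $r\ge2$ is false, and any completion of your sketch will need $r\ge3$.
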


\begin{proof}
    Let us decompose $\tau$ as
    $$
    \tau = 0^\alpha 
    \cdot \prod_{u=1}^{l_1} 
        \left( d_u^{\beta_u} d_u^{\gamma_u}\right)_{(+1)} 
    \cdot \prod_{u=1}^{l_2} 
        \left( d_u^{\lambda_u} \right)_{(2)} 
    \cdot \prod_{u=1}^{l_3} 
        \left( d_u^{\kappa_u} \right)_{(-1)}.
    $$
    If $l_1=l_2=l_3=0$, then the lemma is none but part {(\rm iv)} of Lemma \ref{lem:quad-type-0-nontriv}.
    
    We may now assume that $l_1+l_2+l_3>0$.
    Put
    \begin{align*}
        n_0 &= |\alpha| \\
        n_1 &= \sum_{u=1}^{l_1} d_u \( |\beta_u| + |\gamma_u| \) \\
        n_2 &= 2 \sum_{u=1}^{l_2} d_u |\lambda_u| \\
        n_3 &= \sum_{u=1}^{l_3} d_u |\kappa_u|,
    \end{align*}
    so that $n=n_0+n_1+n_2+n_3$.
    On combining the decomposition of $\tau$ with the last parts of 
    Lemmas \ref{lem:quad-type-0-triv}, \ref{lem:quad-type-0-nontriv}, 
    \ref{lem:quad-type-plus1}, 
    \ref{lem:quad-type-2}, and
    \ref{lem:quad-type-neg1},
    we infer that 
    $\cT=\prod_{i=0}^{3}\cT_i$ where
    \begin{align*}
        \cT_0 &= O\( q^{\frac{rn_0^2}{2}} \) \\
        \cT_1 &= O\( q^{\frac{rn_1^2}{8}+n_1^2} \)  \\
        \cT_2 &= O\( q^{\frac{rn_2^2}{4}+n_2^2} \)  \\
        \cT_3 &= O\( q^{n_3^2} \)  .
    \end{align*}
    
    It remains to show that 
    \begin{equation}\label{eq-pf:form-reduced}
    \frac{rn_0^2}{2} + \frac{rn_1^2}{8}+n_1^2
    + \frac{rn_2^2}{4}+n_2^2 + n_3^2
    \leq \frac{rn^2}{2} - (n-1)(r-2).    
    \end{equation}
    Denote by $\cL$ (resp.~$\cR$) the left hand side (resp.~the right hand side) of \eqref{eq-pf:form-reduced}. We have
    \begin{equation}\label{eq-pf:form-reduced-1}
    \cL \leq \frac{rn_0^2}{2} + 
        \frac{r(n-n_0)^2}{4}+(n-n_0)^2.
    \end{equation}
    The inequality \eqref{eq-pf:form-reduced} then follows from 
    \eqref{eq-pf:form-reduced-1} and the following two inequalities, which can be verified easily:
    $$
    \frac{rn_0^2}{2} + 
        \frac{r(n-n_0)^2}{4}+(n-n_0)^2
        \leq \frac{r}{2}\( n^2 - 2n + \frac{3}{2}\) + 1
    $$
    and
    $$
     \frac{r}{2}\( n^2 - 2n + \frac{3}{2}\) + 1
     \leq \frac{rn^2}{2} - (n-1)(r-2).
    $$
    The proof is complete.
\end{proof}

\begin{prop}\label{prop:matrix-sphere-general-rank}
Suppose that $n\geq 3$ and $r\geq 3$. If $T\in {\rm M}_n$ and $\bfM\in {\rm M}_n^r\setminus\{{\bf 0}\}$, then
    \begin{equation}\label{eq:IR:sphere-size-general-rank}
        \# \sigma_T \sim q^{n^2(r-1)} 
    \end{equation} 
and
    \begin{equation}\label{eq:IR:FT-sphere-nonzero-phase-general-rank}
        \widehat{\sigma}_T(\bfM) = 0(q^{-(r-2)(n-1)-n^2}).
    \end{equation} 
\end{prop}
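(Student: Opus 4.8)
The plan is to mirror the proofs of Propositions \ref{prop:matrix-sphere-rank2} and \ref{prop:matrix-sphere-rank3}, but to replace the case-by-case tables with the single uniform estimate provided by Lemma \ref{lem:gauss-sum-general-rank}. Starting from Lemma \ref{lem:matrix-sphere}, both quantities are controlled by one sum over $S\in\mn\setminus\{0\}$: the sphere size involves $\sum_{S\neq 0}\psi(-ST)\,G(S,0)^r$ through \eqref{eq:IR:sphere-size}, while the Fourier transform involves $\sum_{S\neq 0}\psi(-ST)\prod_{i=1}^r G(S,-M_i)$ through \eqref{eq:IR:FT-sphere-nonzero-phase}. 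First I would discard the unimodular phases $\psi(-ST)$ by the triangle inequality, reducing both to bounding sums of products of $|G(S,\cdot)|$. The decisive point is that Proposition \ref{prop:matrix-Gauss} bounds $|G(S,B)|\ll q^{(n^2+\rho_S)/2}$ \emph{uniformly in the linear term} $B$; consequently, in both sums the factor attached to each $S$ is $\ll q^{r(n^2+\rho_S)/2}$, so the two sums admit an identical estimate.

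Next I would organize the sum over $S$ according to quadratic class types. All matrices $S$ sharing a quadratic class type $\tau$ have the common radical invariant $\rho_\tau$, and their number equals $y_\tau s_\tau$. Hence the total contribution of a fixed nontrivial $\tau$ is $\ll y_\tau s_\tau\, q^{r(n^2+\rho_\tau)/2}=q^{rn^2/2}\,\cT$, where $\cT=q^{r\rho_\tau/2}y_\tau s_\tau$ is exactly the quantity appearing in Lemma \ref{lem:gauss-sum-general-rank}; that lemma bounds it by $O\!\left(q^{rn^2-(n-1)(r-2)}\right)$. Since the number of quadratic class types in $\mn$ is finite and depends only on $n$ (which is fixed, $q$ being the sole asymptotic parameter), summing over all nontrivial $\tau$ leaves the bound unchanged. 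The trivial type $0^{1^{(n)}}$ corresponds to $S=0$, which is excluded from the sum. I therefore obtain that both $\sum_{S\neq 0}|G(S,0)|^r$ and $\sum_{S\neq 0}\prod_{i=1}^r|G(S,-M_i)|$ are $O\!\left(q^{rn^2-(n-1)(r-2)}\right)$.

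Finally I would substitute these into the formulas of Lemma \ref{lem:matrix-sphere}. For the sphere size, dividing the error sum by $q^{n^2}$ yields an error term of size $O\!\left(q^{n^2(r-1)-(n-1)(r-2)}\right)$; because $n\geq 3$ and $r\geq 3$ force $(n-1)(r-2)\geq 2>0$, this is negligible against the main term $q^{n^2(r-1)}$, giving \eqref{eq:IR:sphere-size-general-rank}. For the Fourier transform, dividing by $q^{(r+1)n^2}$ gives $\widehat{\sigma}_T(\bfM)=O\!\left(q^{rn^2-(r+1)n^2-(n-1)(r-2)}\right)=O\!\left(q^{-(r-2)(n-1)-n^2}\right)$, which is \eqref{eq:IR:FT-sphere-nonzero-phase-general-rank}.

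The genuine difficulty is entirely concentrated in Lemma \ref{lem:gauss-sum-general-rank}, which is already established: the real work is to show that, ranging over all quadratic class types—and especially those with large radical invariant—the product $q^{r\rho_\tau/2}y_\tau s_\tau$ never exceeds $q^{rn^2/2-(n-1)(r-2)}$. Granting that lemma, the only two points requiring care here are the uniformity of the Gauss-sum bound in the linear term $B$, which is built into Proposition \ref{prop:matrix-Gauss}, and the harmless, constant-in-$q$ finiteness of the set of quadratic class types, which permits absorbing the sum over types into the implied constant.
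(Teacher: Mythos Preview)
Your proposal is correct and follows essentially the same approach as the paper's own proof: both start from Lemma \ref{lem:matrix-sphere}, invoke the uniform Gauss-sum bound of Proposition \ref{prop:matrix-Gauss}, partition the sum over $S$ by quadratic class type, and apply Lemma \ref{lem:gauss-sum-general-rank} to bound the total contribution by $O\!\left(q^{rn^2-(n-1)(r-2)}\right)$. Your write-up is in fact slightly more explicit than the paper's in flagging the uniformity in $B$ and the finiteness (in $q$) of the set of quadratic class types; one minor wording slip is that ``that lemma bounds it by $O\!\left(q^{rn^2-(n-1)(r-2)}\right)$'' should refer to the total contribution $q^{rn^2/2}\cT$, not to $\cT$ itself, which the lemma bounds by $O\!\left(q^{rn^2/2-(n-1)(r-2)}\right)$.
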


\begin{proof}
    We first compute the size of the matrix sphere $\sigma_T$. By \eqref{eq:IR:sphere-size}, we have
    \begin{align*}
    \# \sigma_T &= q^{n^2(r-1)} + \frac{1}{q^{n^2}} 
            \sum_{S\in {\rm M}_n\setminus\{0\}} \psi\(-ST\) G(S,0)^r \\
        &=: q^{n^2(r-1)} + \frac{\cS}{q^{n^2}} .
    \end{align*}
    
    By virtue of \eqref{prop:matrix-Gauss}, we have 
    $$
    G(S,0) \ll  q^{\frac{n^2+\rho_S}{2}}.
    $$
    To upper bound $\cS$, we partition the sum over $S$ into quadratic class types and apply Lemma \ref{lem:gauss-sum-general-rank}.
    We have 
    $$
    \cS \ll q^{rn^2-(n-1)(r-2)}.
    $$
    Thus \eqref{eq:IR:sphere-size-general-rank} follows.
    
    %psg 

    We then compute Fourier transforms of the matrix sphere $\sigma_T$. For $\bfM\in {\rm M}_n^r\setminus\{0\}$, by \eqref{eq:IR:FT-sphere-nonzero-phase} we have
    \begin{equation*}
    \widehat{\sigma}_T(\bfM)  
        = \frac{1}{q^{n^2(r+1)}} 
        \sum_{S\in{\rm M}_n\setminus\{0\}} \psi(-ST) 
        \prod_{i=1}^r G(S,-M_i) 
        =:  \frac{\cS'}{q^{n^2(r+1)}} .
    \end{equation*}
    To upper bound $\cS'$, we partition the sum over $S$ into quadratic class types and apply Lemma \ref{lem:gauss-sum-general-rank}.
    We have 
    $$
    \cS' \ll q^{rn^2-(n-1)(r-2)}.
    $$
    Thus \eqref{eq:IR:FT-sphere-nonzero-phase-general-rank} follows.
\end{proof}

We are in a position to prove the main result on the incidence function for a matrix algebra of rank at least 3.

\begin{proof}[Proof of Theorem \ref{thm:strong-Falconer-general-rank}]
    It follows from \eqref{eq:IR:incidence} that 
    $$
    \nu(T) = \frac{(\# \sigma_T)}{q^{rn^2}}
        + \frac{q^{2rn^2}}{(\# E)^2} \sum_{\bfM\neq {\bf 0}} 
        \left| \widehat{E}(\bfM) \right|^2 \widehat{\sigma}_T(\bfM).
    $$
    By the Plancherel formula we have
    $$
    \sum_{\bfM\neq {\bf 0}} 
        \left| \widehat{E}(\bfM) \right|^2
        \leq \sum_{\bfM\in {\rm M}_n^r} 
        \left| \widehat{E}(\bfM) \right|^2
        = \frac{(\# E)}{q^{rn^2}}.
    $$
    Therefore, by \eqref{eq:IR:FT-sphere-nonzero-phase-general-rank},
    $$
    \nu(T) = \frac{(\# \sigma_T)}{q^{rn^2}} 
        + O\left( \frac{q^{rn^2-(r-2)(n-1)-n^2}}{(\# E)} \right) .
    $$
    In view of \eqref{eq:IR:sphere-size-general-rank}, $\nu(T)>0$ provided that
    $$
    (\# E) \gg_\epsilon q^{rn^2-(r-2)(n-1)+\epsilon}.
    $$
\end{proof}

%======================
%\section*{Acknowledgments}
%======================

%This work is supported by the Vietnam Academy of Science and Technology (VAST) grant number THTETN.08/22-24, and is funded by Vingroup Joint Stock Company and supported by Vingroup Innovation Foundation (VinIF) under the project code VINIF.2021.DA00030.

%----------------------

%----------------------
\bibliographystyle{amsplain}
\bibliography{erdos-falconer}
\end{document}